\newtheorem{theorem}{Theorem}[section]
\newtheorem{proposition}{Proposition}[section]
\newtheorem{remark}{Remark}[section]
\newtheorem{lemma}{Lemma}[section]
\newcommand{\red}[1]{\textcolor{red}{#1}}
\begin{document}

\title{\bf Mean Field Approximations to a Queueing System with Threshold-Based Workload Control Scheme}
\author{Qihui Bu$^{1}$, Liwei Liu$^{*1}$ and Yiqiang Q. Zhao$^{2}$\\
{\em\small 1: School of Science, Nanjing University of Science and Technology}\\
{\em\small Nanjing 210094, Jiangsu, China}\\
{\em\small 2: School of Mathematics and Statistics, Carleton University}\\
{\em\small Ottawa, Ontario Canada  K1S 5B6}\\
{\em\small\em $^{*}$Corresponding Author: lwliu@mail.njust.edu.cn}}
\date{}
\maketitle

\renewcommand{\baselinestretch}{1.3}
\large\normalsize
\begin{abstract}
  In this paper, motivated by considerations of server utilization and energy consumptions in cloud computing, we investigate a homogeneous queueing system with a threshold-based workload control scheme. In this system, a virtual machine will be turned off when there are no tasks in its buffer upon the completion of a service by the machine,  and turned on when the number of tasks in its buffer reaches a pre-set threshold value. Due to complexity of this system, we propose approximations to system performance measures by mean field limits. An iterative algorithm is suggested for the solution to the mean field limit equations. In addition, numerical and simulation results are presented to justify the proposed approximation method and to provide a numerical analysis on the impact of the system performances by system parameters.
\par
{\bf Keywords:} Mean field limit, threshold service queue, join-the-shortest queue, Markov process.
\end{abstract}
\section{Introduction and model description}
The cloud computing paradigm has many applications in data centers, web server farms, next-generation wireless communication systems, such as cloud radio access networks, see for example Cai, Yu and Bu~\cite{CY} among others. As the development and maturity of technology, there are more and more people using cloud services, which makes cloud computing a significant part of our daily and professional lives. Very often, cloud providers supply a large number of resources, dynamically distributed, for supporting cloud computing services. The following is an abstract description of the structure of cloud computing: needs from users are presented to the system as tasks and each task is dispatched to a virtual machine, which is chosen from a group of available virtual machines by a load scheduling algorithm.

A central topic in the research on cloud computing is on load balancing algorithms, since a good load balancing algorithm often leads to good system performance.
There are many algorithms proposed in the literature, such as round robin, min-min and max-min algorithms, and active clustering load balancing, referring to Choudhary~\cite{MC}, Vakilinia, Ali and Qiu~\cite{VAQ}, and Dieker and Suk~\cite{DS} for more details.

 The power-of-$d$ scheme is considered in this paper, in which a task is dispatched to the shortest one among the $d$ randomly chosen queues. Vvedenskaya, Dobrushin and Karpelevich~\cite{VD} studied the queueing system, in which an arbitrary arrival customer joins the shortest one of the two randomly chosen queues and obtained that the tail of the equilibrium distribution decays doubly exponentially as the system size increases. In the last two decades, scholars, continuing previous studies, proposed multiple models with the power-of-$d$ scheme and discussed their performances in terms of the mean field limit.  Mitzenmacher~\cite{MM} considered a supermarket model by focusing on the impact on the expected sojourn time by the system parameters. Mazumdar and his coauthors studied a series of heterogeneous networks with mean field interactions (see, for example references Karthik, Mukhopadhyay and Mazumdar~\cite{KMM} and Mukhopadhyay~\cite{AM}). Li and Yang~\cite{Li} analyzed a simple framework for applying the mean field theory to dealing with a heterogeneous work stealing model. Dawson, Tang and Zhao~\cite{zhao} considered a general mean field interaction function in a queueing system, for which they obtained an explicit formula for the limit solution. Ying, Srikant and Kang~\cite{YSK} proposed a new load balancing algorithm, batch-filling, which can dramatically reduce the sample complexity compared to previously proposed algorithms.

 In all of the above systems, virtual machines for the most of time are powered on, which is very friendly to users, but also red raises issues such as less efficient utilization and more energy consumptions. Especially, as the demand for cloud service scales up, the utilization of virtual machines becomes more and more important and challenging to address. However, in the literature researchers paid more attention to performance measures, such as the waiting time for users, than that to the system utilization and the energy consumption problem. To fill this gap, in this paper we introduce a threshold-based workload control scheme, with which a virtual machine is forced to be turned off when the machine is idle until the number of tasks in its buffer reaches the pre-set threshold value. The concept of vacations is a similar idea to the threshold-based workload control scheme, which can reduce the energy consumptions and improve the efficiency of the system. Li \textit{el al.}~\cite{lql} studied a supermarket model with multiple vacations by the mean field limit, and provided approximation results through an iterative algorithm. An M/G/$1$ queueing model was constructed as an approximation limit method in LawanyaShri, Balusamy and Subha~\cite{MBS} to characterize the cloud computing and a cost function minimization problem was introduced to obtain the optimal threshold value. In addition, Mukherjee \textit{et al.}~\cite{tabs} proposed a joint auto-scaling and load balancing scheme, in which the global queue length information about the queue length is not required.

In this paper, we construct a queueing system with $N$ ($N\geq1$) identical and parallel virtual machines (servers), each having a buffer of infinite capacity. Tasks arrive to this system according to a Poisson process with rate $N\lambda$, where $\lambda>0$. Upon arrival, the task is dispatched to the virtual machine with the shortest queue among the $d$ randomly chosen machines. If there is a tie, virtual machines with the shortest queue are chosen randomly. The service times of each virtual machine are identical independent distributed with the exponential distribution of service rate $\mu$. When there is no task receiving service at the machine or in the buffer, the machine will be turned off and kept dormant until there are $M$ ($M\geq 2$) tasks in its buffer. This threshold-based workload control scheme makes sure that any idle virtual machines will be turned off to reduce the energy consumption and increase the performance. For this system, we apply the mean field limit theory to obtain an approximate model and calculate approximations to system stationary behaviours by an iterative algorithm.
The main contribution in this paper includes:
\begin{enumerate}
  \item A threshold-based workload control scheme, proposed for large-scaled parallel queueing networks to address issues of system utilization and energy consumptions and to improve the system performance;
  \item A mean field limiting process, constructed and justified to approximate the queueing system with a threshold-based workload control scheme;
  \item An iterative algorithm for (approximate) stationary performances of the queueing system, such as the mean queue length of any virtual machine and the mean sojourn time of an arbitrary arrival task;
  \item A numerical/simulation analysis for the impact of system parameters on the system performance, and for approximation errors. It is interesting to note that a new phenomenon/trend has been observed for the mean sojourn time, which can be longer for smaller values of the arrival rate, a property that does not hold for the system in which all virtue machines are always on.
\end{enumerate}

The rest of this paper is organized as follows: In Section~2, an infinite-dimensional system of mean field equations is proposed as the limit of the process for the original queueing system; the justification of using stationary probabilities of the limiting process as approximations to the corresponding stationary probabilities for the original queueing system is provided; and an iterative algorithm is provided for stationary behaviour of the queueing system; In Section~3, numerical/simulation analysis is performed to allow us to see performance trend or impact of system parameters on the system performance based on a large number of numerical computations/simulation runs; and finally in Section~4, concluding remarks are made.

\section{Analysis of the system}

In this section, we analyze the queueing system with threshold-based workload control scheme, proposed earlier. In Subsection~2.1, we introduce two sequences of fraction vectors, which are two Markov processes for characterizing this queueing system, and the corresponding limiting system. In Subsection~2.2, we construct mean field limit equations and prove that the original stochastic process for the queueing system converges to the solution to the limit equations. We provide an iterative algorithm for computing the fixed point of the limiting process and discuss the system stationary behaviour in Subsection~2.3.

\subsection{An infinite-dimensional Markov process}

We first define two sequences of random variables for the queueing system with threshold-based workload control scheme:
\begin{description}
 \item[$n_k^{(W)}(t)$:] the number of working virtual machines with at least $k$ $(k=1,2,3,\ldots)$ tasks (including the task being in execution) at time $t$;
  \item[$n_j^{(V)}(t)$:] the number of dormant virtual machines with at least $j$ $(j=0,1,2,\ldots, M-1)$ tasks at time $t$.
\end{description}
Clearly, we have the following equality and inequality relationships:
\[\begin{array}{c}
 \displaystyle n_1^{(W)}(t)+n_0^{(V)}(t)=N, \\[4mm]
 \displaystyle N\geq n_1^{(W)}(t)\geq n_2^{(W)}(t)\geq n_3^{(W)}(t)\geq \cdots\geq 0, \\[4mm]
 \displaystyle N\geq n_0^{(V)}(t)\geq n_1^{(V)}(t)\geq n_2^{(V)}(t)\geq \cdots \geq n_{M-1}^{(V)}(t)\geq 0.
\end{array}\]

Next, we introduce two important fractions,
\begin{eqnarray*}
 \displaystyle U^{(N)}_{k}(t) &=& \frac{n_k^{(W)}(t)}{N},~~~~ k=1,2,3,\ldots, \\[4mm]
\displaystyle  V^{(N)}_{j}(t) &=& \frac{n_j^{(V)}(t)}{N},~~~~j=0,1,2,\ldots,M-1,
\end{eqnarray*}
which are the fractions of working virtual machines with at least $k$ tasks, and dormant virtual machines with at least $j$ tasks at time $t\geq0$, respectively.

 Let
\begin{eqnarray*}
  \displaystyle \mathbf{U}^{(N)}(t) &=& \left(U^{(N)}_{1}(t), U^{(N)}_{2}(t), U^{(N)}_{3}(t),\ldots \right), \\[4mm]
 \displaystyle \mathbf{V}^{(N)}(t)&=& \left(V^{(N)}_{0}(t), V^{(N)}_{1}(t),V^{(N)}_{2}(t), \ldots, V^{(N)}_{M-1}(t)\right).
\end{eqnarray*}
Then, $\displaystyle \{(\mathbf{U}^{(N)}(t), \mathbf{V}^{(N)}(t)), t\geq0\}$ is an infinite-dimensional Markov process for characterizing the queueing system with threshold-based workload control scheme whose states are defined by
\begin{eqnarray*}
  \mathbf{u}^{(N)}&=&(u_1^N, u_2^N, u_3^N, \ldots),\\[4mm]
  \mathbf{v}^{(N)}&=&(v_0^N, v_1^N, \ldots, v_{M-1}^N),
\end{eqnarray*}
and the state space is defined by
\begin{eqnarray*}
  \displaystyle \Omega_{N} &=&  \{(\textbf{u}^{(N)}, \textbf{v}^{(N)}): 1\geq u_1^{(N)}\geq u_2^{(N)}\geq u_3^{(N)}\geq \cdots \geq0,\displaystyle 1\geq v_0^{(N)}\geq v_1^{(N)}\geq \cdots\geq v_{M-1}^{(N)} \geq 0, \\[4mm]
  && \displaystyle  Nu_k^{(N)}, Nv_j^{(N)}=0,1,2,\ldots,\textmd{ for } k\geq1, M-1\geq j\geq0\}.
\end{eqnarray*}

In the next subsection, a limiting system will be introduced, which is the limit of the Markov process $\displaystyle \{(\mathbf{U}^{(N)}(t), \mathbf{V}^{(N)}(t)), t\geq0\}$ when $N\longrightarrow \infty$. The state space $\Omega$ of this limiting process is described as follows:
\begin{eqnarray*}
  \mathbf{u}&=&(u_1, u_2, u_3, \ldots),\\[4mm]
            \mathbf{v}&=&(v_0,v_1,\ldots,v_{M-1}).
\end{eqnarray*}
Then, the state space is given by
\begin{equation*}
   \displaystyle \Omega=\{(\mathbf{u},\mathbf{v}): 1\geq u_1\geq u_2\geq \cdots\geq0, \displaystyle 1\geq v_0\geq v_1\geq \cdots\geq  v_{M-1} \geq0\}.
\end{equation*}
We adapt the following distance between $(\mathbf{u},\mathbf{v})$ and $(\mathbf{u}',\mathbf{v}')$ on $\Omega$:
\[\displaystyle \rho\left\{(\mathbf{u},\mathbf{v}),(\mathbf{u}',\mathbf{v}')\right\}
=\max\displaystyle\left\{\displaystyle\sup\limits_{k\in K}\mid\frac{u_k-u'_k}{k+1}\mid,\displaystyle \sup\limits_{j\in J}\mid\frac{v_j-v'_j}{j+1}\mid\right\},\]
where $K=\{1,2,\ldots\}$ and $J=\{0,1,2,\ldots,M-1\}$.

\begin{theorem}\label{space}
Space $\Omega$ is compact under the metric $\rho$.
\end{theorem}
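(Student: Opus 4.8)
The plan is to establish compactness through sequential compactness, which is equivalent for the metric space $(\Omega,\rho)$. So I would fix an arbitrary sequence $\{(\mathbf{u}^{(n)},\mathbf{v}^{(n)})\}_{n\geq 1}\subset\Omega$ and produce a subsequence converging, in the metric $\rho$, to a point of $\Omega$.

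\textbf{Diagonal extraction.} Every coordinate sequence $\{u_k^{(n)}\}_{n\geq1}$ takes values in the compact interval $[0,1]$, and similarly for the finitely many $\{v_j^{(n)}\}_{n\geq1}$, $j\in J$. By Bolzano--Weierstrass applied successively to $k=1,2,\ldots$ together with a Cantor diagonal argument (the $\mathbf v$-component needing only $M$ successive extractions), I obtain a subsequence, still written $\{(\mathbf{u}^{(n)},\mathbf{v}^{(n)})\}$, along which $u_k^{(n)}\to u_k$ for every $k\in K$ and $v_j^{(n)}\to v_j$ for every $j\in J$, with all limits in $[0,1]$. Since the inequalities $1\geq u_1\geq u_2\geq\cdots\geq0$ and $1\geq v_0\geq\cdots\geq v_{M-1}\geq0$ defining $\Omega$ are non-strict, they are preserved under coordinatewise limits of reals, so the limit point $(\mathbf u,\mathbf v)$ lies in $\Omega$.

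\textbf{Convergence in $\rho$.} Given $\varepsilon>0$, I would use the bound $|u_k^{(n)}-u_k|/(k+1)\leq 1/(k+1)$, valid because both numbers lie in $[0,1]$, to pick $k_0$ with $1/(k_0+1)<\varepsilon$; this controls the supremum over all $k>k_0$ by $\varepsilon$ uniformly in $n$. On the finite index set $\{1,\ldots,k_0\}\cup J$, the coordinatewise convergence just obtained supplies an $n_0$ with $|u_k^{(n)}-u_k|<\varepsilon$ and $|v_j^{(n)}-v_j|<\varepsilon$ for all $n\geq n_0$, so each of those weighted terms is below $\varepsilon$ as well. Taking the maximum yields $\rho\{(\mathbf u^{(n)},\mathbf v^{(n)}),(\mathbf u,\mathbf v)\}<\varepsilon$ for $n\geq n_0$, completing the proof.

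The argument is essentially routine; the one place deserving care is the last step, where the decay of the weights $1/(k+1)$ is precisely what lets the supremum be split into a tail that is uniformly small and a finite head controlled by ordinary convergence --- without such weights (for instance under the uniform metric $\sup_k|u_k-u'_k|$) the space would fail to be compact. An alternative, equally short route is to observe that $\rho$ metrizes the product topology on $[0,1]^{K}\times[0,1]^{J}$, which is compact by Tychonoff's theorem, and that $\Omega$ is closed therein; but the direct sequential argument above is self-contained.
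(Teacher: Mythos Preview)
Your proof is correct and follows essentially the same approach as the paper's: sequential compactness via coordinatewise extraction, followed by splitting the supremum defining $\rho$ into a finite head controlled by the coordinatewise convergence and an infinite tail controlled uniformly by the weight $1/(k+1)$. Your version is in fact more careful than the paper's, since you spell out the Cantor diagonal argument and explicitly verify that the non-strict inequalities defining $\Omega$ pass to coordinatewise limits.
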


\begin{remark} A proof of the above theorem is given in Appendix~\ref{appendix A}. A similar proof can be found in appendix~A of reference \cite{AM} to the case with a state space, consisting of a single fraction vector.
\end{remark}

\subsection{The mean field limit, sample paths}

In this subsection, we first discuss the stationary condition and present the generator for the Markov process $\displaystyle \{(\textbf{U}^{(N)}(t), \textbf{V}^{(N)}(t))\}$. We then prove that $\displaystyle \{(\textbf{U}^{(N)}(t), \textbf{V}^{(N)}(t))\}$ converges weakly to a limiting process, which is the solution to the mean field limit equations.

Since the sample paths of this system is the same as the classic Join-shortest-queue system when the queue size is greater than $M-1$, we give the stable regime of this queueing system in the following lemma.

\begin{lemma}
The system under the threshold-based workload control is stable if and only if $\lambda<\mu$.
\end{lemma}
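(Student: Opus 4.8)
Throughout I treat $\{(\mathbf U^{(N)}(t),\mathbf V^{(N)}(t))\}$ as an irreducible continuous-time Markov chain whose transition rates are uniformly bounded (arrivals at rate $N\lambda$, departures at total rate $\mu W(t)\le N\mu$, where $W(t)$ is the number of working servers, and switch-on events occur only at arrival epochs), so that ``stable'' is exactly ``positive recurrent''. Let $Q(t)=\sum_i q_i(t)$ denote the total number of tasks in the system, and note that tasks held by dormant servers always total strictly less than $NM$. I would prove the two implications separately.

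\textbf{Necessity ($\lambda\ge\mu\Rightarrow$ unstable).} Arrivals form a Poisson process $A(t)$ of rate $N\lambda$, while the instantaneous departure rate is $\mu W(t)\le N\mu$; hence by a thinning coupling the departure process is dominated by a Poisson process $D(t)$ of rate $N\mu$ independent of $A$, and $Q(t)\ge Q(0)+A(t)-D(t)$. If $\lambda>\mu$ the strong law gives $Q(t)/t\to N(\lambda-\mu)>0$, so the chain is transient. For the boundary case $\lambda=\mu$ I would argue by contradiction: if a stationary distribution $\pi$ existed, rate conservation for $Q$ would force the long-run departure rate $\mu\,\mathbb E_\pi[W]$ to equal the arrival rate $N\mu$, hence $W=N$ $\pi$-a.s.; but a state with $W=N$ (all servers holding one task) communicates with one having $W<N$, contradicting irreducibility together with recurrence. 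Thus the chain is not positive recurrent whenever $\lambda\ge\mu$.

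\textbf{Sufficiency ($\lambda<\mu\Rightarrow$ stable), strategy.} The naive Foster--Lyapunov test with $V=Q$ fails here: in states with many dormant (short) queues, the power-of-$d$ rule routes almost all arriving tasks into those queues while only $\mu W$ tasks leave, and $W$ can be as small as $1$, so the instantaneous drift of $Q$ -- indeed of any separable Lyapunov function -- may be positive. Stability is an averaging effect (dormant servers keep filling up and switching on, so a long-run fraction of working servers close to $\lambda/\mu<1$ is maintained), and I would establish it by one of two routes. \emph{Route (i): stochastic domination.} Couple so that $Q(t)$ is stochastically dominated by the total queue of the ``uniform-routing'' version of the same system, namely $N$ independent M/M/$1$ queues, each fed by an independent Poisson($\lambda$) stream and each run under the same $M$-threshold ($N$-policy) rule, using the classical fact that power-of-$d$ routing produces a total queue length no larger (stochastically) than uniform-at-random routing; a single M/M/$1$ queue under the $N$-policy is a textbook model that is positive recurrent precisely when $\lambda<\mu$, and $N$ independent copies inherit this, so the original chain is positive recurrent. \emph{Route (ii): fluid / multi-step drift.} Apply the drift criterion over a fixed horizon $\tau$: starting from $Q(t)=q$ large, at least one ``heavily loaded'' server with $\Theta(q)$ tasks exists (since $q>NM$), such servers stay heavily loaded over $[t,t+\tau]$ and keep serving at rate $\mu$ by the remark preceding the lemma (above level $M-1$ the dynamics are exactly JSQ($d$)), while the lightly loaded servers equilibrate so that their expected number working is $\approx(\lambda/\mu)$ times their count; a short computation then yields $\mathbb E[Q(t+\tau)-Q(t)\mid Q(t)=q]\le \tau k(\lambda-\mu)+o(\tau)$ with $k\ge1$ the number of heavily loaded servers, which is negative and bounded away from $0$ for $q$ large, giving positive recurrence.

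\textbf{Main obstacle.} The real difficulty is entirely in the sufficiency direction and comes from the non-work-conserving on/off dynamics. In route (i) the delicate point is that ``dormant vs.\ working'' is a history-dependent label, so the standard sorted-queue coupling for power-of-$d$ versus uniform routing must be run on the augmented state (queue lengths together with on/off flags) while preserving the comparison through arrivals, service completions, and switch-on events. In route (ii) the delicate point is the averaging estimate for the lightly loaded servers over the horizon $\tau$, uniformly over the finitely many possible on/off configurations. I would lean on route (ii) in combination with the quoted sample-path identification above level $M-1$, which is precisely what pins down the behaviour of the heavily loaded servers and makes the fluid computation go through.
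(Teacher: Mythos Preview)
The paper does not actually prove this lemma. The only justification offered is the single sentence immediately preceding the statement, ``Since the sample paths of this system is the same as the classic Join-shortest-queue system when the queue size is greater than $M-1$'', after which the stability condition is simply asserted. There is no separate treatment of necessity, no Lyapunov or coupling argument, and no discussion of the boundary case $\lambda=\mu$.

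Your proposal is therefore considerably more detailed than what appears in the paper. Your necessity argument (thinning and the strong law for $\lambda>\mu$; rate conservation $N\lambda=\mu\,\mathbb E_\pi[W]$ together with $W\le N$ and irreducibility for $\lambda=\mu$) is correct and is entirely absent from the paper. For sufficiency, your Route~(ii) is precisely an attempt to turn the paper's one observation---that above level $M-1$ the per-server dynamics coincide with ordinary JSQ($d$)---into a rigorous proof via a multi-step/fluid drift criterion, and you correctly identify that the non-work-conserving on/off behaviour below level $M$ is what blocks a one-step Foster--Lyapunov argument and forces an averaging step. That is the route closest in spirit to the paper's heuristic, and it is the one I would pursue.

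One caution on Route~(i): what you call ``the classical fact that power-of-$d$ routing produces a total queue length no larger (stochastically) than uniform-at-random routing'' is established in the literature for \emph{work-conserving} systems with always-on servers. It is not a classical fact for systems with history-dependent on/off labels, and you yourself flag that carrying the sorted-queue coupling through switch-on and switch-off events is the delicate point. So Route~(i) cannot be presented as resting on a known result; it would require proving a new coupling lemma for the threshold model. Given that the paper's own argument (such as it is) already hands you the above-$(M{-}1)$ identification, Route~(ii) is the more economical way to fill in what the paper leaves unproved.
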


Let $\mathbf{A}_{N}$ be the (infinitesimal) generator of the semigroup $T_N(t)$, $t\geq0$, associated with the process $\displaystyle \{(\textbf{U}^{(N)}(t), \textbf{V}^{(N)}(t))\}$.   Then, for a bounded continuous function $f: \Omega_{N} \rightarrow R $,  $\mathbf{A}_{N}f$ is described by:
\begin{equation}\label{Ant}
 \displaystyle\mathbf{A}_{N}f(\textbf{u}^N,\textbf{v}^N)=\sum\limits_{\displaystyle \scriptsize \begin{array}{c}
                                                            (\textbf{u}'^N,\textbf{v}'^N)\in\Omega_{N}, \\
                                                            (\textbf{u}'^N,\textbf{v}'^N)\neq (\textbf{u}^N,\textbf{v}^N)
                                                          \end{array}
}\displaystyle q_{(\textbf{u}^N,\textbf{v}^N)\longrightarrow(\textbf{u}'^N,\textbf{v}'^N)}(f(\textbf{u}'^N,\textbf{v}'^N)-f(\textbf{u}^N,\textbf{v}^N)),
\end{equation}
where $q_{(\textbf{u}^N,\textbf{v}^N)\longrightarrow(\textbf{u}'^N,\textbf{v}'^N)}$ is the transition rate from state $(\textbf{u}^N,\textbf{v}^N)$ to state $(\textbf{u}'^N,\textbf{v}'^N)$. Detailed information about $\mathbf{A}_{N}$ is needed in our analysis, which is provided in the following theorem. A proof to the theorem can be found in Aappendix~\ref{appendix B}.

\begin{theorem} \label{theorem 2.3}
Let $(\textbf{u}^N,\textbf{v}^N)\in\Omega_{N}$ be arbitrary state of process $\displaystyle \{(\textbf{U}^{(N)}(t), \textbf{V}^{(N)}(t)), t\geq0\}$. The generator $\textbf{A}_{N}$ of the Markov process $\displaystyle \{(\textbf{U}^{(N)}(t), \textbf{V}^{(N)}(t)), t\geq0\}$, acting on a bounded continuous function $f:\Omega_{N}\longrightarrow R$, is given by:
\begin{align}
 \nonumber \textbf{A}_{N}f(\textbf{u}^N,\textbf{v}^N)= &N\lambda\sum\limits_{k=2}\limits^{\infty}(u_{k-1}^{N}(t)-u_{k}^{N}(t)) W_{k}\displaystyle\left[f\displaystyle(\textbf{u}^{N}+\frac{\textbf{e}_{k}}{N},\textbf{v}^{N})-f(\textbf{u}^{N},\textbf{v}^{N})\right] \\
  \nonumber & + N\lambda v_{M-1}^{N}(t)W_{M}\left[f\displaystyle(\textbf{u}^{N}+\sum\limits_{k=1}\limits^{M}\frac{\textbf{e}_{k}}{N},\textbf{v}^{N}-\sum\limits_{k=0}
  \limits^{M-1}\frac{\textbf{e}_{0}}{N})-f(\textbf{u}^{N},\textbf{v}^{N})\right]\\
\nonumber &+  N\lambda\sum\limits_{k=1}\limits^{M-1}(v_{k-1}^{N}(t)-v_{k}^{N}(t)) W_{k} \displaystyle\left[f\displaystyle(\textbf{u}^{N},\textbf{v}^{N}+\frac{\textbf{e}_{k}}{N})-f(\textbf{u}^{N},\textbf{v}^{N})\right]\\
  \nonumber & +  N\mu(u_{1}^{N}(t)-u_{2}^{N}(t))\left[f\displaystyle(\textbf{u}^{N}-\frac{\textbf{e}_{1}}{N},\textbf{v}^{N}+\frac{\textbf{e}_{0}}{N})-
   f(\textbf{u}^{N},\textbf{v}^{N})\right]\\
 & + N\mu\sum\limits_{k=2}\limits^{\infty}(u_{k}^{N}(t)-u_{k+1}^{N}(t))\left[f\displaystyle(\textbf{u}^{N}-\frac{\textbf{e}_{k}}{N},\textbf{v}^{N})
-f(\textbf{u}^{N},\textbf{v}^{N})\right],
\end{align}
where $\textbf{e}_{k}$ is a vector whose $k$th entry is 1 and 0 elsewhere, $W_k$ is given in as follows:
\begin{equation*}
                W_{k}=\left\{\begin{array}{ll}
                               \sum\limits_{n=1}\limits^{d}C_{d}^{n}(v_{0}^{N}(t)-v_{1}^{N}(t))^{n-1}(v_{1}^{N}(t)+
                                            u_{1}^{N}(t))^{d-n}, &\hspace{-0.8cm}k=1, \\
                               \sum\limits_{n=1}\limits^{d}C^{n}_{d}(u_{k}^{N}(t)+v_{k}^{N}(t))^{d-n}
                                (u_{k-1}^{N}(t)-u_{k}^{N}(t)+v_{k-1}^{N}(t)-v_{k}^{N}(t))^{n-1},& \\
                                &\hspace{-0.8cm} k=2,3,\ldots,M-1,\\
                                \sum\limits_{n=1}\limits^{d}C^{n}_{d}(u_{M}^{N}(t))^{d-n}
                                (u_{M-1}^{N}(t)-u_{M}^{N}(t)+v_{M-1}^{N}(t))^{n-1}, &\hspace{-0.8cm} k=M, \\
                                \sum\limits_{n=1}\limits^{d}C^{n}_{d}(u_{k-1}^{N}(t)-u_{k}^{N}(t))^{n-1}
                                (u_{k}^{N})^{d-n},&\hspace{-0.8cm} k=M+1,M+2,\cdots.
                             \end{array}
                \right.
       \end{equation*}
\end{theorem}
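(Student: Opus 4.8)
The plan is to read off the generator directly from the generic expression~(\ref{Ant}) by enumerating the transitions of the $N$-machine system and computing their rates. The total rate of leaving any state is at most $N(\lambda+\mu)$ (one arrival stream of rate $N\lambda$ and at most $N$ servers each completing at rate $\mu$), so the sum in~(\ref{Ant}) converges absolutely and it suffices to list the reachable states together with their transition rates. There are exactly five event types: (i) an arriving task is routed to a working machine holding exactly $k-1\ge1$ tasks, which then holds $k$; (ii) an arriving task is routed to a dormant machine holding exactly $M-1$ tasks, which is turned on and becomes a working machine with $M$ tasks; (iii) an arriving task is routed to a dormant machine holding exactly $j$ tasks with $0\le j\le M-2$, which then holds $j+1$ and stays dormant; (iv) a service completion at a working machine holding exactly one task, which empties and is turned off; and (v) a service completion at a working machine holding exactly $k\ge2$ tasks, which then holds $k-1$. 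Reading the state change off the bookkeeping identities for $n_k^{(W)},n_j^{(V)}$: event (i) increases only $u_k^N$ by $\frac1N$; event (v) decreases only $u_k^N$ by $\frac1N$; event (iv) decreases $u_1^N$ and increases $v_0^N$, each by $\frac1N$; event (iii) increases only $v_{j+1}^N$ by $\frac1N$; and event (ii) increases each of $u_1^N,\dots,u_M^N$ and decreases each of $v_0^N,\dots,v_{M-1}^N$ by $\frac1N$ simultaneously. This accounts, term by term, for the five groups of differences $f(\cdot)-f(\mathbf u^N,\mathbf v^N)$ in the displayed formula.

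It remains to compute the rates. For the service events this is immediate: there are $N(u_1^N-u_2^N)$ working machines holding exactly one task and $N(u_k^N-u_{k+1}^N)$ holding exactly $k$ for each $k\ge2$, each completing at rate $\mu$, giving the factors $N\mu(u_1^N-u_2^N)$ and $N\mu(u_k^N-u_{k+1}^N)$. For the arrival events I will use the power-of-$d$ routing rule: with $d$ machines sampled independently and uniformly, the task sent to one of shortest queue length and ties broken uniformly at random, the probability that the shortest sampled queue length equals a level $\ell$ equals $(p_\ell+q_\ell)^d-q_\ell^{\,d}$, where $p_\ell$ is the fraction of all machines with queue length exactly $\ell$ and $q_\ell$ the fraction with queue length at least $\ell+1$. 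By exchangeability the arriving task is then equally likely to land on any one of the $Np_\ell$ machines at level $\ell$, so the total arrival rate into the working (resp.\ dormant) machines at level $\ell$ is $N\lambda$ times $(p_\ell+q_\ell)^d-q_\ell^{\,d}$ times the ratio of the working (resp.\ dormant) exact-level-$\ell$ fraction to $p_\ell$. Finally, the elementary identity $\sum_{n=1}^d C_d^n a^{n-1}b^{d-n}=((a+b)^d-b^d)/a$, applied with $a$ the relevant exact-level fraction and $b=q_\ell$, rewrites these rates in the form (exact-level fraction)$\,\times N\lambda\,W$ that appears in the statement.

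To identify the $W_k$ one now evaluates $p_\ell,q_\ell$ and performs the working/dormant split according to the value of $\ell$, keeping track of which machine types can occur at each level — the turn-off rule forbids a working machine at level $0$ and the threshold caps a dormant machine at $M-1$ tasks: for $\ell=0$ only dormant machines occur, with $p_0=v_0^N-v_1^N$ and $q_0=u_1^N+v_1^N$ (so $p_0+q_0=u_1^N+v_0^N=1$), yielding $W_1$ and the $k=1$ term of the third sum via event (iii); for $1\le\ell\le M-2$ both types occur, with $p_\ell=(u_\ell^N-u_{\ell+1}^N)+(v_\ell^N-v_{\ell+1}^N)$ and $q_\ell=u_{\ell+1}^N+v_{\ell+1}^N$, giving $W_{\ell+1}$ with $k=\ell+1\in\{2,\dots,M-1\}$ and splitting into the $u$-term of the first sum (event (i)) and the $v$-term of the third sum (event (iii)); for $\ell=M-1$ both types occur but the dormant exact-level fraction is $v_{M-1}^N$ and $p_{M-1}=(u_{M-1}^N-u_M^N)+v_{M-1}^N$, $q_{M-1}=u_M^N$, giving $W_M$, whose working part is the $k=M$ term of the first sum (event (i)) and whose dormant part is exactly the turn-on rate $N\lambda v_{M-1}^N W_M$ of event (ii); and for $\ell\ge M$ only working machines occur, with $p_\ell=u_\ell^N-u_{\ell+1}^N$ and $q_\ell=u_{\ell+1}^N$, so the dormant fractions drop out and one gets $W_{\ell+1}$ with $k\ge M+1$ in the first sum. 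Collecting the five groups of (state change, rate) pairs into~(\ref{Ant}) gives the displayed formula. The only genuinely delicate part is this last case analysis together with the working/dormant split: one must be careful about which machine types are admissible at each level and verify that each algebraic simplification of the binomial sums reproduces the corresponding $W_k$; everything else is routine bookkeeping.
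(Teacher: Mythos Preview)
Your argument is correct and follows the same overall architecture as the paper's proof: enumerate the five transition types, read off the state increments, and compute the rates.  Where you differ is in the derivation of the arrival--routing probabilities.  The paper (Appendix~\ref{appendix B}) fixes a level $k$ and decomposes the event ``the task is routed to a working machine at level $k-1$'' into three sub-possibilities according to whether the $d$ samples are all working, or contain dormant machines only at levels $\ge k$, or contain dormant machines also at level $k-1$; it then sums the three combinatorial expressions and simplifies to $(u_{k-1}^N-u_k^N)W_k$.  You instead compute in one stroke the probability that the sampled minimum equals a given level $\ell$, namely $(p_\ell+q_\ell)^d-q_\ell^{\,d}$, and then invoke exchangeability to split this between the working and dormant machines at that level in proportion to their fractions; the binomial identity $\sum_{n=1}^d C_d^n a^{n-1}b^{d-n}=\bigl((a+b)^d-b^d\bigr)/a$ then produces the $W_k$ directly.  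Your route is shorter and avoids the three-way case split, at the cost of relying on the exchangeability observation (which is immediate but should be stated); the paper's route is more explicit and makes the combinatorics visible.  Both land on the same formula, and your level-by-level case analysis ($\ell=0$, $1\le\ell\le M-2$, $\ell=M-1$, $\ell\ge M$) matches the paper's partition of the $W_k$.
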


In the following theorem, a system of mean field limit equations is constructed, whose solution is the approximation to the stochastic process $\{(\textbf{U}^{(N)}(t), \textbf{V}^{(N)}(t)), t\geq0\}$.
\begin{theorem}\label{limiting process}
If $\displaystyle(\textbf{U}^{N}(0),\textbf{V}^{N}(0))$ converges in distribution to a certain constant $\displaystyle(\textbf{c}_{u},\textbf{c}_{v})\in \Omega$ as $N\longrightarrow\infty,$
then the process $\displaystyle \{(\textbf{U}^{(N)}(t), \textbf{V}^{(N)}(t)),t\geq 0\}$ converges in distribution to a process $(\textbf{u}(t),\textbf{v}(t))\in\Omega$ as $N\longrightarrow\infty$. The process $(\textbf{u}(t),\textbf{v}(t))$ is given by the unique solution to the following system of differential equations:
\begin{eqnarray}
 \label{d1} \displaystyle(\textbf{u}(0),\textbf{v}(0)) &=& (\textbf{c}_{u},\textbf{c}_{v}), \\
  \label{d2}\displaystyle (\dot{\textbf{u}}(t),\dot{\textbf{v}}(t))&=&\textbf{y}(\textbf{u}(t),\textbf{v}(t)),
\end{eqnarray}
where 

\begin{eqnarray}
 \textbf{y}(\textbf{u}(t),\textbf{v}(t))&=& (y_{u_{1}} (\textbf{u},\textbf{v}),y_{u_{2}} (\textbf{u},\textbf{v}),\ldots;y_{v_{0}} (\textbf{u},\textbf{v}),y_{v_{1}} (\textbf{u},\textbf{v}),\ldots,y_{v_{M-1}} (\textbf{u},\textbf{v})), \\[4mm]
 \nonumber y_{u_{k}} (\textbf{u},\textbf{v})\hspace{-3mm} &=&\hspace{-3mm}\left\{\begin{array}{ll}
                                                \mu(u_{2}(t)-u_{1}(t))+\lambda v_{M-1}(t)W_{M}, &\hspace{-2mm}k=1,\\[4mm]
                                                \lambda (u_{k-1}(t)-u_{k}(t))W_{k}+\mu(u_{k+1}(t)-u_{k}(t))+\lambda v_{M-1}(t)W_{M},&\hspace{-2mm}k=2,3,\ldots,M, \\[4mm]
                                                \lambda (u_{k-1}(t)-u_{k}(t))W_{k}+\mu(u_{k+1}(t)-u_{k}(t)),&\hspace{-2mm} k=M+1,\ldots,
                                              \end{array}
                                              \right.\\[4mm]
                                              \\[4mm]
  y_{v_{k}} (\textbf{u},\textbf{v}) \hspace{-3mm}&=& \hspace{-3mm}\left\{\begin{array}{ll}
                                                 \mu(u_{1}(t)-u_{2}(t))-\lambda v_{M-1}(t)W_{M}, & ~~~~~~~~~~~~~~k=0, \\[4mm]
                                                 \lambda (v_{k-1}(t)-v_{k}(t))W_{k}-\lambda v_{M-1}(t)W_{M}, &~~~~~~~~~~~~~~ k=1,2,3,\ldots,M-1,
                                               \end{array}
                                        \right.
\end{eqnarray}
\end{theorem}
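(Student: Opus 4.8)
The plan is to follow the classical mean-field / propagation-of-chaos recipe of Kurtz for density-dependent Markov chains, adapted to the infinite-dimensional state space $\Omega$ (compact by Theorem~\ref{space}). I would organize the argument into three stages: (i) tightness of the family $\{(\mathbf{U}^{(N)}(t),\mathbf{V}^{(N)}(t)),\,t\ge0\}_{N}$ in the Skorokhod space $D([0,\infty),\Omega)$; (ii) identification of every weak limit point as a solution of the system (\ref{d1})--(\ref{d2}); and (iii) uniqueness of that solution, which upgrades subsequential convergence to full convergence in distribution to the deterministic path $(\mathbf{u}(t),\mathbf{v}(t))$.

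For stage (i), I would first fix a truncation: since $\Omega$ is compact under $\rho$ and the coordinates are bounded in $[0,1]$ with the weights $1/(k+1)$, $1/(j+1)$ damping the tails, tightness of the $\Omega$-valued process reduces to tightness of each coordinate $U^{(N)}_k$ and $V^{(N)}_j$ in $D([0,\infty),[0,1])$ together with the compact-containment condition, which is automatic. For each coordinate I would apply the Aldous--Rebolledo criterion: write $U^{(N)}_k(t) = U^{(N)}_k(0) + \int_0^t \beta^{(N)}_k(s)\,ds + M^{(N)}_k(t)$ using the generator expression of Theorem~\ref{theorem 2.3}, where the drift $\beta^{(N)}_k$ is uniformly bounded (all rates are $O(N)$ and each jump changes a coordinate by $1/N$, so $\mathbf{A}_N f$ applied to the coordinate projection is $O(1)$), and the martingale $M^{(N)}_k$ has predictable quadratic variation of order $O(1/N)\to0$ (rate $O(N)$ times squared jump size $O(1/N^2)$). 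Bounded drift plus vanishing quadratic variation gives the required modulus-of-continuity estimates over small time intervals, hence tightness.

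For stage (ii), from a convergent subsequence $(\mathbf{U}^{(N')},\mathbf{V}^{(N')})\Rightarrow(\mathbf{u},\mathbf{v})$ I would pass to the limit in the semimartingale decomposition of each coordinate. The martingale part vanishes in $L^2$, so $M^{(N')}_k\Rightarrow0$. For the drift, the key point is that the functions $W_k$ and the coefficients $u_{k-1}-u_k$, $v_{k-1}-v_k$, $v_{M-1}$ appearing in Theorem~\ref{theorem 2.3} are continuous in $(\mathbf{u},\mathbf{v})$ on $\Omega$ (finite sums of products of continuous coordinate maps), so by the continuous-mapping theorem and dominated convergence $\int_0^t\beta^{(N')}_k(s)\,ds \Rightarrow \int_0^t y_{u_k}(\mathbf{u}(s),\mathbf{v}(s))\,ds$, and similarly for the $v$-coordinates; here one reads off that the limiting drift is exactly $\mathbf{y}$ by dropping the factor $N$ and the $\mathbf{e}_k/N$ increments in the generator, i.e. by the first-order Taylor expansion $f(\mathbf{u}+\mathbf{e}_k/N)-f(\mathbf{u})\approx \partial_{u_k}f/N$. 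This shows any limit point satisfies (\ref{d1})--(\ref{d2}) in integrated form, and since $\mathbf{y}$ is bounded the limit path is Lipschitz in $t$, hence differentiable a.e. and actually a classical solution.

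Stage (iii), uniqueness, is where I expect the main obstacle, because $\mathbf{y}$ is an infinite system and one must check a Lipschitz-type bound in a norm compatible with the metric $\rho$. The plan is to show $\mathbf{y}$ is Lipschitz on $\Omega$ with respect to the weighted sup-norm $\|(\mathbf{u},\mathbf{v})\| = \max\{\sup_k |u_k|/(k+1),\ \sup_j|v_j|/(j+1)\}$: each $y_{u_k}$ depends only on finitely many coordinates near index $k$ (namely $u_{k-1},u_k,u_{k+1}$, the block $v_0,\dots,v_{M-1}$, and — through $W_M$ — $u_{M-1},u_M$), and each $W_k$ is a polynomial in bounded quantities, hence Lipschitz; the only subtlety is controlling the $k$-dependence of the Lipschitz constant so that the weights $1/(k+1)$ absorb it, which works because the increments $u_{k-1}-u_k$ telescope and $W_k\le \sum_n C_d^n \le 2^d$ uniformly. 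A standard Grönwall argument in this norm then yields uniqueness, and combined with stages (i)--(ii), every subsequential limit coincides with this unique solution, so the whole sequence converges in distribution to $(\mathbf{u}(t),\mathbf{v}(t))$, completing the proof. Existence of the solution also follows from Picard iteration under the same Lipschitz bound, or alternatively from the tightness-plus-identification argument itself.
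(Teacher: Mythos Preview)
Your proposal is correct, but it takes a genuinely different route from the paper. The paper argues via \emph{generator convergence}: it first establishes (Proposition~\ref{p2.1}) a Lipschitz bound on $\mathbf{y}$ to get existence and uniqueness of the ODE, and then (Proposition~\ref{pd}) obtains uniform exponential bounds on the first and second partial derivatives of the flow $(\mathbf{c}_u,\mathbf{c}_v)\mapsto(\mathbf{u}(t,\mathbf{c}_u),\mathbf{v}(t,\mathbf{c}_v))$. These derivative bounds are used to exhibit a class $D$ of test functions (functions of the flow) that forms a core for the limiting generator $A$; the Taylor expansion $N[f(\mathbf{u}+\mathbf{e}_k/N,\mathbf{v})-f(\mathbf{u},\mathbf{v})]\to\partial_{u_k}f$ then yields $\mathbf{A}_Nf\to Af$ uniformly for $f\in D$, and the conclusion follows from the Trotter--Kurtz semigroup convergence theorems in Ethier--Kurtz. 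By contrast, you run the \emph{compactness--uniqueness} scheme: tightness via Aldous--Rebolledo on each coordinate (bounded drift, $O(1/N)$ quadratic variation), identification of limit points by passing to the limit in the semimartingale decomposition, and uniqueness from a Lipschitz/Gr\"onwall argument in the weighted sup-norm. Your approach is somewhat more elementary in that it avoids the second-derivative estimates of Proposition~\ref{pd} and the core machinery altogether; the paper's approach, on the other hand, packages everything into a single appeal to abstract semigroup theory once the analytic preliminaries are in place, and the derivative bounds it proves are of independent interest for regularity of the limiting flow. Both routes rely on the same Lipschitz property of $\mathbf{y}$ (the paper's Proposition~\ref{p2.1} is essentially your stage~(iii)), so the substantive difference is whether one proves convergence through generators or through martingales.
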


Before proving Theorem~\ref{limiting process}, we provide the following two lemmas, proofs to which are given in Appendix~\ref{appendix C}.

\begin{proposition}\label{p2.1}
If $(\textbf{c}_{u},\textbf{c}_{v})\in \Omega$, then the system of mean field limiting equations \eqref{d1}--\eqref{d2} has a unique solution.
\end{proposition}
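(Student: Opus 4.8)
The plan is to prove existence and uniqueness of the solution to \eqref{d1}--\eqref{d2} by recognizing the right-hand side $\mathbf{y}(\mathbf{u},\mathbf{v})$ as a Lipschitz vector field on a suitable Banach space and then invoking the Picard--Lindel\"of theorem. First I would fix the function space: take the space of bounded sequences $\ell^\infty$ for the $\mathbf{u}$-component (noting $0\le u_k\le 1$ for all $k$ on $\Omega$) paired with the finite-dimensional $\mathbb{R}^{M}$ for the $\mathbf{v}$-component, equipped with the supremum norm (or the weighted metric $\rho$ from the excerpt, which is equivalent to sup-norm boundedness here). The state space $\Omega$ is a bounded convex subset of this space, and by Theorem~\ref{space} it is compact under $\rho$.

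Next I would verify that $\mathbf{y}$ maps $\Omega$ into itself in the sense needed for local existence, and more importantly that each coordinate $y_{u_k}$ and $y_{v_k}$ is a Lipschitz function of $(\mathbf{u},\mathbf{v})$ with a Lipschitz constant uniform in $k$. The point is that each $W_k$ is a polynomial (of fixed degree $d$) in finitely many of the coordinates $u_{k-1},u_k,u_{k+1},v_{k-1},v_k$ and $u_M$, $v_{M-1}$; since all these coordinates lie in $[0,1]$, each $W_k$ is bounded by $1$ and is Lipschitz in its arguments with a constant depending only on $d$ (not on $k$), because on a bounded domain a polynomial has bounded gradient. Likewise the terms $u_{k-1}-u_k$, $u_{k+1}-u_k$, $v_{k-1}-v_k$ appearing as prefactors are bounded and Lipschitz. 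Combining these via the product rule for Lipschitz functions on bounded sets, and noting that each $y_{u_k}$ (resp.\ $y_{v_k}$) depends only on a bounded number of coordinates near index $k$ plus the single fixed term $\lambda v_{M-1}(t)W_M$, I would conclude that $\|\mathbf{y}(\mathbf{u},\mathbf{v})-\mathbf{y}(\mathbf{u}',\mathbf{v}')\|_\infty \le L\,\rho\{(\mathbf{u},\mathbf{v}),(\mathbf{u}',\mathbf{v}')\}$ for some constant $L=L(\lambda,\mu,d,M)$. I would also check $\mathbf{y}$ is bounded on $\Omega$ (immediate from the bounds above).

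With a globally Lipschitz, bounded vector field on a closed convex subset $\Omega$ of a Banach space, the standard Picard iteration $(\mathbf{u}^{(n+1)},\mathbf{v}^{(n+1)})(t) = (\mathbf{c}_u,\mathbf{c}_v) + \int_0^t \mathbf{y}\big((\mathbf{u}^{(n)},\mathbf{v}^{(n)})(s)\big)\,ds$ is a contraction on $C([0,T];\Omega)$ for $T$ small, giving a unique local solution; the uniform-in-time Lipschitz bound lets me extend to all $t\ge 0$ by the usual continuation argument, and invariance of $\Omega$ follows because the drift points inward on the boundary faces (each coordinate inequality $u_k\ge u_{k+1}$, $v_j\ge v_{j+1}$, $u_k\ge 0$, $v_j\ge 0$ is preserved by the sign structure of $\mathbf{y}$, which can be read off termwise). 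Uniqueness within $\Omega$ is immediate from the contraction; uniqueness among all (a priori unbounded) solutions follows because any solution starting in $\Omega$ stays in $\Omega$ by the invariance argument and hence is covered by the Lipschitz theory.

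The main obstacle I anticipate is establishing the \emph{uniform-in-$k$} Lipschitz estimate rigorously: one must be careful that the constant multiplying $\rho$ does not grow with the index $k$, which requires handling the weights $1/(k+1)$ in the metric $\rho$ correctly — since $y_{u_k}$ depends on coordinates with indices $k-1,k,k+1$, the mismatch in weights contributes only a bounded factor (e.g.\ $(k+2)/(k+1)\le 2$), so this is manageable but needs to be written out. A secondary technical point is the special coupling term $\lambda v_{M-1}(t)W_M$, which appears in infinitely many equations $y_{u_k}$ for $k=1,\dots,M$ — but since it is a single bounded Lipschitz scalar and enters only finitely many ($M$) equations, it does not spoil the estimate. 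I expect the invariance of $\Omega$ (checking the drift on each boundary face) to be the most tedious but conceptually routine part.
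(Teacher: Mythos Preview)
Your proposal is correct and follows essentially the same route as the paper: both establish that the right-hand side $\mathbf{y}$ is globally Lipschitz on the bounded state set (the paper does this by writing out the block-tridiagonal Jacobian $DF(x)$ explicitly and bounding its norm, whereas you argue more qualitatively from the polynomial structure of the $W_k$), and then invoke Picard--Lindel\"of. One harmless slip: the $W_k$ are bounded by a constant depending on $d$ (e.g.\ $d\cdot 2^{d-1}$ on $\Omega$), not by $1$, but this does not affect your argument; your additional remarks on forward invariance of $\Omega$ and global continuation are details the paper leaves implicit.
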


\begin{proposition}\label{pd}
Let $S_1=\{(k,i,j)\mid k\geq 1;i\geq 1; 0\leq j\leq M-1\}$ and $S_2=\{(k,i,i',j,j')\mid k\geq 1; i,i'\geq 1; 0\leq j,j'\leq M-1\}$. Denote by $(\textbf{u}(t,\textbf{c}_{u}),\textbf{v}(t,\textbf{c}_{v}))$  the unique solution to \eqref{d1}--\eqref{d2}. Then, all the first-order and second-order partial derivatives (appeared below) exist and satisfy the following inequalities:
\begin{eqnarray*}
  \label{pd1} &&\sup\limits_{(k,i,j)\in S_1}\left\{\mid \frac{\partial u_{k}}{\partial c_{ui}}\mid,\mid \frac{\partial u_{k}}{\partial c_{vj}}\mid\right\}\leq\exp \left\{At\right\}, \\[4mm]
  \label{pd2} &&\sup\limits_{(k,i,j)\in S_1}\left\{\mid \frac{\partial v_{k}}{\partial c_{ui}}\mid,\mid \frac{\partial v_{k}}{\partial c_{vj}}\mid\right\}\leq\exp\left\{A't\right\},\\[4mm]
  \label{pd3}&&\sup\limits_{(k,i,i',j,j')\in S_2
  }\left\{\mid\frac{\partial^2u_k}{\partial c_{ui}\partial c_{ui'}}\mid,\mid\frac{\partial^2u_k}{\partial c_{ui}\partial c_{vj}}\mid,\mid\frac{\partial^2u_k}{\partial c_{vj}\partial c_{vj'}}\mid\right\}\leq\frac{B}{A+2\mu}\left[\exp\{2At\}-\exp\{At\}\right],\\[4mm]
  &&\sup\limits_{(k,i,i',j,j')\in S_2
  }\left\{\mid\frac{\partial^2v_k}{\partial c_{ui}\partial c_{ui'}}\mid,\mid\frac{\partial^2u_k}{\partial c_{ui}\partial c_{vj}}\mid,\mid\frac{\partial^2u_k}{\partial c_{vj}\partial c_{vj'}}\mid\right\}\leq\frac{B}{A}\left[\exp\{2A't\}-\exp\{A't\}\right],\\[4mm]
\end{eqnarray*}
where $c_{ui}$ and $c_{vi}$ denote the $i$-th element of $\textbf{c}_{u}$ and $\textbf{c}_{v}$, respectively. Besides, all constants $A$, $A'$, $B$ and $K_i$ for $i=1, 2, \ldots, 6$ are given by 
\begin{align}
  K_1 =& \sum\limits_{n=1}\limits^{d}C_{d}^{n}2^{n-1}2^{d-n}=2^{d-1}(2^d-1),  \label{eqn:K1} \\
  K_2 =& \sum\limits_{n=1}\limits^{d}C_{d}^{n}(d-n)2^{n}2^{d-n}+\sum\limits_{n=1}\limits^{d}C_{d}^{n}(n-1)2^{n}2^{d-n}=(d-1)2^{d}(2^d-1),\\
  K_3 =& \sum\limits_{n=1}\limits^{d}C_{d}^{n}2^{n-1}=\frac{1}{2}(3^d-1)\\
  K_4 =& \sum\limits_{n=1}\limits^{d}C_{d}^{n}(d-n)2^{n}+\sum\limits_{n=1}\limits^{d}C_{d}^{n}(n-1)2^{n}=(d-1)(3^d-1),\\
  K_5 =& 4\sum\limits_{n=1}\limits^{d}C_{d}^{n}(d-n)(d-n-1)2^{d-n-2}2^{n-1}+16\sum\limits_{n=1}\limits^{d}C_{d}^{n}(d-n)(n-1)2^{d-n-1}
  2^{n-2} \nonumber \\
   & + 16\sum\limits_{n=1}\limits^{d}C_{d}^{n}(n-1)(n-2)2^{d-n}2^{n-3} \nonumber \\
  =& 2^{d-1}\sum\limits_{n=1}\limits^{d}C_{d}^{n}(d-n)(d-n-1)+2^{d+1}(d-2)\sum\limits_{n=1}\limits^{d}C_{d}^{n}(n-1),\\
  K_6=& \sum\limits_{n=1}\limits^{d}C_{d}^{n}(d-n)(d-n-1)2^{n-1}+6\sum\limits_{n=1}\limits^{d}C_{d}^{n}(d-n)(n-1)2^{n-1} \nonumber \\
      &+9\sum\limits_{n=1}\limits^{d}C_{d}^{n}(n-1)(n-2)2^{n-3}, \label{eqn:K6}
\end{align}
and $A=(2K_1+K_2+K_3+K_4)\lambda+2\mu$, $A'=(2K_1+K_2+K_3+K_4)\lambda$, $B=(2K_2+2K_5+K_4+K_6)\lambda$.
\end{proposition}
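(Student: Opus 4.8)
The plan is to obtain the estimates by differentiating the mean field system \eqref{d1}--\eqref{d2} with respect to the initial data $(\textbf{c}_u,\textbf{c}_v)$ and controlling the resulting variational equations with Gronwall's inequality. The starting observation is that each component of the vector field $\textbf{y}$ is a polynomial depending on only finitely many coordinates of $(\textbf{u},\textbf{v})$ --- $y_{u_k}$ on $u_{k-1},u_k,u_{k+1}$ and, when $k\le M$, additionally on $u_{M-1},u_M,v_{M-1}$; $y_{v_k}$ similarly --- so $\textbf{y}$ is $C^\infty$ with locally finite dependence. To make the infinite-dimensional differentiation rigorous I would work first with the finite truncations of the system (set $u_\ell\equiv 0$ for $\ell>L$), for which classical smooth dependence on initial conditions applies, establish all bounds below with constants that do not depend on $L$, and then pass to the limit $L\to\infty$ invoking the uniqueness and continuous dependence guaranteed by Proposition~\ref{p2.1}.

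For the first-order bounds, fix the coordinate of $(\textbf{c}_u,\textbf{c}_v)$ with respect to which we differentiate and write $\phi_k(t)=\partial u_k/\partial c_{ui}$ and $\psi_k(t)=\partial v_k/\partial c_{ui}$ (and analogously for $c_{vj}$). Differentiating \eqref{d2} gives a linear system $\dot\phi_k=\sum_m\frac{\partial y_{u_k}}{\partial u_m}\phi_m+\sum_m\frac{\partial y_{u_k}}{\partial v_m}\psi_m$, and likewise for the $\psi_k$, with Kronecker-delta initial data, so $\sup_k|\phi_k(0)|\le 1$ and $\sup_k|\psi_k(0)|\le 1$. The coefficients are controlled by two elementary facts about $W_k$ that hold uniformly in $k$: $|W_k|\le K_1$ (or $K_3$), and $|\partial W_k/\partial u_m|,|\partial W_k/\partial v_m|\le K_2$ (or $K_4$). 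These follow by differentiating each summand $C_d^n(\cdot)^{d-n}(\cdot)^{n-1}$ of $W_k$ and bounding every base by $2$; summing over the (at most four) coordinates on which $W_k$ depends and adding the $\pm\mu$ coefficients coming from the $\mu(u_{k\pm 1}-u_k)$ terms yields exactly the constants in \eqref{eqn:K1}--\eqref{eqn:K6} and the aggregate rates $A$, $A'$. Taking upper Dini derivatives of $\sup_k|\phi_k|$ and $\sup_k|\psi_k|$ produces a pair of scalar differential inequalities driven by these rates, and Gronwall delivers the first two displayed bounds.

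For the second-order bounds, differentiate the variational system once more. Writing $\Phi_k=\partial^2 u_k/\partial(\cdot)\partial(\cdot)$ and $\Psi_k=\partial^2 v_k/\partial(\cdot)\partial(\cdot)$, one obtains a linear system with the same homogeneous part as before, plus an inhomogeneous forcing assembled from the second partials $\partial^2 y_{u_k}/\partial(\cdot)\partial(\cdot)$ multiplied by products of two first-order sensitivities. The second partials of $W_k$ are bounded uniformly in $k$ by $K_5$ and $K_6$, obtained exactly as before from the three differentiation patterns that produce the $(d-n)(d-n-1)$, $(d-n)(n-1)$ and $(n-1)(n-2)$ groups in \eqref{eqn:K1}--\eqref{eqn:K6}, while the products of first-order sensitivities are at most $\exp\{2At\}$ (resp. $\exp\{2A't\}$) by the bounds just proved. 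Hence the forcing is bounded by a constant multiple of $B$ times $\exp\{2At\}$ (resp. $\exp\{2A't\}$), and Gronwall applied to $\Phi(0)=0$, $\dot\Phi\le(\text{rate})\,\Phi+B\exp\{2(\cdot)t\}$ produces the $[\exp\{2At\}-\exp\{At\}]$-type shape of the last two bounds, with the denominators recording the gap between the forcing exponent and the homogeneous rate.

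I expect the main difficulty to be of two kinds. The conceptual one is legitimising the differentiation in the infinite-dimensional state space and making sure all the suprema over $k$ are finite and interchange with $d/dt$; the truncation-and-limit device above is the clean way to handle this, but one must check that nothing in the bounds degenerates as $L\to\infty$. The technical one is the bookkeeping: verifying that the worst-case combination of the $\mu$-terms with the binomial bounds on $W_k$, $\partial W_k$ and $\partial^2 W_k$ assembles precisely into $K_1,\dots,K_6$, $A$, $A'$, $B$ as stated, and that the two coupled inequalities for the $u$- and $v$-sensitivities (and likewise at second order) can be organised into the two separate exponential rates claimed.
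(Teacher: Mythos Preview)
Your proposal is correct and follows essentially the same approach as the paper: differentiate the mean field system with respect to the initial data, bound the polynomial coefficients of the resulting variational equations via the uniform estimates on $W_k$ and its derivatives, and close with a Gronwall-type inequality (the paper invokes Lemma~3.1 of \cite{JB} for this last step and cites Lemma~3.2 of \cite{JB} for the overall template). The paper's own proof is much terser---it only writes out the variational equation for $\partial u_k/\partial c_{uk}$ and leaves the second-order case entirely implicit---so your more explicit treatment of the truncation device and of the forcing structure at second order is a genuine fleshing-out rather than a different argument.
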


We are now ready to provide a proof to Theorem~\ref{limiting process}.

\begin{proof}[\textit{Proof to Theorem~\ref{limiting process}.}]
 Let $L$ denote the set of the real continuous functions on space $\Omega$. Let $D$ be a subset of $L$, containing the uniformly bounded functions whose partial derivatives satisfy the inequalities given in Proposition~\ref{pd}. Then, according to Proposition 2 and Lemma 15 in \cite{VD}, we know that $D$ is a core of $A$ by constructing $D_0\subset D$ , which depends on finitely many variables.
 Moreover, for $f\in D$, we have
\begin{eqnarray*}
  N \displaystyle\left[f\displaystyle(\textbf{u}+\frac{\textbf{e}_{k}}{N},\textbf{v})-f(\textbf{u},\textbf{v})\right]&\longrightarrow& \frac{\partial f(\textbf{u},\textbf{v})}{\partial u_k},\\
  N\displaystyle\left[f\displaystyle(\textbf{u},\textbf{v}+\frac{\textbf{e}_{j}}{N})-f(\textbf{u},\textbf{v})\right] &\longrightarrow&  \frac{\partial f(\textbf{u},\textbf{v})}{\partial v_j},\\
  N\left[f\displaystyle(\textbf{u}-\frac{\textbf{e}_{k}}{N},\textbf{v}+\frac{\textbf{e}_{k}}{N})-
   f(\textbf{u},\textbf{v})\right]&\longrightarrow&\frac{\partial f(\textbf{u},\textbf{v})}{\partial v_k}-\frac{\partial f(\textbf{u},\textbf{v})}{\partial u_k}.
\end{eqnarray*}

Thus,
\begin{eqnarray}\label{An}
 \nonumber \lim\limits_{N\longrightarrow\infty}\textbf{A}_{N}f(\textbf{u}^N,\textbf{v}^N)&=&\lambda\sum\limits_{k=2}\limits^{\infty}(u_{k-1}^{N}(t)-u_{k}^{N}(t)) W_{k}\displaystyle\frac{\partial f(\textbf{u},\textbf{v})}{\partial u_k}\\
  \nonumber &&\hspace*{-3.6cm} + \lambda v_{M-1}^{N}(t) V_{M}\displaystyle\left(\sum\limits_{k=1}\limits^{M}\frac{\partial f(\textbf{u},\textbf{v})}{\partial u_k}-\sum\limits_{k=0}\limits^{M-1}\frac{\partial f(\textbf{u},\textbf{v})}{\partial v_k}\right)
  + N\lambda\sum\limits_{k=1}\limits^{M-1}(v_{k-1}^{N}(t)-v_{k}^{N}(t)) V_{k} \displaystyle\frac{\partial f(\textbf{u},\textbf{v})}{\partial v_k}\\
  \nonumber &&\hspace*{-3.6cm} + N\mu(u_{1}^{N}(t)-u_{2}^{N}(t))\left[\frac{\partial f(\textbf{u},\textbf{v})}{\partial v_0}-\frac{\partial f(\textbf{u},\textbf{v})}{\partial u_1}\right]
   - N\mu\sum\limits_{k=2}\limits^{\infty}(u_{k}^{N}(t)-u_{k+1}^{N}(t))\frac{\partial f(\textbf{u},\textbf{v})}{\partial u_k}\\[4mm]
&=&\displaystyle\frac{\partial}{\partial t}f\left(\textbf{u}(t,\textbf{c}_u),\textbf{v}(t,\textbf{c}_v)\right)\mid_{t=0}.
\end{eqnarray}

It is clear that if $T(t)$ is the semigroup of the operators associated with $(\textbf{u}(t),\textbf{v}(t))$. Then,
$T(t)f(\textbf{u},\textbf{v})=f(\textbf{u}(t,\textbf{c}_u),\textbf{v}(t,\textbf{c}_v))$ and its corresponding generator $A$ satisfies that \[Af(\textbf{u},\textbf{v})=\lim\limits_{t\longrightarrow0}\frac{T(t)f(\textbf{u},\textbf{v})-f(\textbf{u},\textbf{v})}{t}=\displaystyle\frac{\partial}{\partial t}f\left(\textbf{g}(t,\textbf{c}_u),\textbf{h}(t,\textbf{c}_v)\right)\mid_{t=0},\]
which indicates $A_N f \longrightarrow A f$ for all $f\in D$.

By Theorem 6.1 (p.28) of \cite{EK}, it follows that $T_N (t)\to T(t)$ as $N$ goes to infinity. Finally, with the help of  Theorem 2.11 of \cite{EK}, we have the desired result.
\end{proof}


\subsection{Mean field limit, stationary behaviours}

In this subsection, we prove that the stationary distribution of $\{(\textbf{U}^{(N)}(t), \textbf{V}^{(N)}(t))\}$ converges weakly to $\delta_{\pi}$, where $\pi$ is the stationary state of the limiting process and $\delta$ is the Dirac function. Some of the important stationary measures for this queueing system are also presented.

Due to the construction of the process, $(\mathbf{\pi}_{u},\mathbf\pi_{v})\triangleq\lim\limits_{t\to \infty}(\mathbf{u}(t,\mathbf{c}_u),\mathbf{v}(t,\mathbf{c}_v))$ can be seen as either the stationary state of $(\mathbf{u}(t),\mathbf{v}(t))$ or the limiting distribution of process $\{X(t),I(t)\}$, where $X(t)$ and $I(t)$ denote the queue length and the server's status, respectively. Under the ergodic condition, we know that the limiting distribution of the Markov process is exactly the stationary distribution of the process.

 It is worth noting that $(\pi^{W},\pi^{V})Q=0$ is exactly the same as setting the mean field equations equal to zero. Specifically,
 \begin{eqnarray}
   \label{1} \mu(\pi_{2}^{W}-\pi_{1}^{W})+\lambda \pi_{M-1}^{V}W_{M}&=&0,\\[4mm]
   \label{2} \lambda (\pi_{k-1}^{W}-\pi_{k}^{W})W_{k}+\mu(\pi_{k+1}^{W}-\pi_{k}^{W})+\lambda \pi_{M-1}^{V}W_{M}&=&0,~~~ k=2,3,\cdots,M, \\[4mm]
   \label{3}\lambda (\pi_{k-1}^{W}-\pi_{k}^{W})W_{k}+\mu(\pi_{k+1}^{W}-\pi_{k}^{W})&=&0, ~~~k=M+1,M+2,\ldots,\\[4mm]
   \label{4}\mu(\pi_{1}^{W}-\pi_{2}^{W})-\lambda \pi_{M-1}^{V}W_{M}&=&0, \\[4mm]
   \label{5} \lambda (\pi_{j-1}^{V}-\pi_{j}^{V})W_{j}-\lambda \pi_{M-1}^{V}W_{M}&=&0,~~~ j=1,2,3,\ldots,M-1.
  \end{eqnarray}

In the following theorem, we will state that the set of equations \eqref{1}--\eqref{5} has a unique solution $(\pi^{W},\pi^{V})$, or equivalently the process $\{X(t),I(t)\}$ has a unique stationary distribution.

%
\begin{theorem}\label{fixed point}
There exists a unique stationary distribution for the process $\{X(t),I(t)\}$, the unique solution to \eqref{d1}--\eqref{d2} in the space $\Omega$, which can be recursively computed.
\end{theorem}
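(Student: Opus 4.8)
The plan is to solve the stationary system \eqref{1}--\eqref{5} explicitly and recursively. A point of $\Omega$ is an equilibrium of the mean field dynamics \eqref{d1}--\eqref{d2} precisely when it is a solution $(\pi^{W},\pi^{V})$ of \eqref{1}--\eqref{5}; any such point automatically satisfies the normalization $\pi_{1}^{W}+\pi_{0}^{V}=1$ (this quantity is conserved along \eqref{d2}, since $y_{u_{1}}+y_{v_{0}}\equiv 0$) and, because $\lambda<\mu$, the decay $\pi_{k}^{W}\downarrow 0$ with $\sum_{k}\pi_{k}^{W}<\infty$ supplied by the stability lemma. I would start from two reductions: \eqref{4} equals $-1$ times \eqref{1}, hence adds nothing; and the binomial identity $\sum_{n=1}^{d}C_{d}^{n}a^{n}b^{d-n}=(a+b)^{d}-b^{d}$ collapses the power-of-$d$ factors, so that, writing $s_{k}=\pi_{k}^{W}+\pi_{k}^{V}$ for $0\le k\le M-1$ (with $s_{0}=1$) and $s_{k}=\pi_{k}^{W}$ for $k\ge M$, one gets $(s_{k-1}-s_{k})W_{k}=s_{k-1}^{d}-s_{k}^{d}$ for every $k\ge 1$ and $(\pi_{k-1}^{W}-\pi_{k}^{W})W_{k}=(\pi_{k-1}^{W})^{d}-(\pi_{k}^{W})^{d}$ for $k\ge M+1$.

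First I would resolve the tail. Substituting the second identity into \eqref{3} and summing over $k\ge K$ (the $\mu$-terms telescope to $\mu\pi_{K}^{W}$ and the $\lambda$-terms to $\lambda(\pi_{K-1}^{W})^{d}$, using $\pi_{k}^{W}\to 0$) yields $\mu\pi_{k}^{W}=\lambda(\pi_{k-1}^{W})^{d}$ for $k\ge M+1$; grouping the term $\lambda\pi_{M-1}^{V}W_{M}$ of \eqref{2} at $k=M$ into the $W_{M}$-factor and using the first identity extends this to $\mu\pi_{M}^{W}=\lambda s_{M-1}^{d}$. Hence $s_{k+1}=(\lambda/\mu)s_{k}^{d}$ for all $k\ge M-1$, and since $\lambda/\mu<1$ and $d\ge 2$ these iterates are, for every $s_{M-1}\in[0,1]$, nonnegative, strictly decreasing, summable and vanishing in the limit; the entire tail $\{\pi_{k}^{W}\}_{k\ge M}$ is thereby an explicit, strictly increasing function of the single scalar $s_{M-1}$.

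Then I would treat the finite head $1\le k\le M-1$. Adding \eqref{2} and \eqref{5} at the same index (they carry the same $W_{k}$) and using the first identity telescopes to $\lambda(s_{k-1}^{d}-s_{k}^{d})+\mu(\pi_{k+1}^{W}-\pi_{k}^{W})=0$ for $2\le k\le M-1$ and, at $k=1$, to $\lambda(1-s_{1}^{d})+\mu(\pi_{2}^{W}-\pi_{1}^{W})=0$; moreover \eqref{5} on its own is linear in $\pi_{k}^{V}$ once $s_{k-1},s_{k}$ are known, namely $\pi_{k}^{V}=\pi_{k-1}^{V}-a/(\lambda W_{k})$ with $a:=\lambda\pi_{M-1}^{V}W_{M}=\lambda(1-s_{1}^{d})$. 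Taking two scalar parameters, say $u_{1}=\pi_{1}^{W}$ and $v_{1}=\pi_{1}^{V}$ (so that $v_{0}=1-u_{1}$, $s_{1}=u_{1}+v_{1}$, $a=\lambda(1-s_{1}^{d})$), equations \eqref{1}, \eqref{2} for $2\le k\le M-1$, \eqref{5} for $2\le j\le M-1$ together with the relations $s_{k}=\pi_{k}^{W}+\pi_{k}^{V}$ can be run as a forward recursion that computes $(\pi_{k}^{W},\pi_{k}^{V})_{k\le M}$ and then the tail; what remains are two scalar consistency relations among the parameters (coming from \eqref{5} at $j=1$ and from \eqref{2} at $k=M$), and I would finish by proving that this $2\times 2$ system has a unique admissible solution. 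The point $(\pi^{W},\pi^{V})$ so produced is the unique equilibrium of \eqref{d1}--\eqref{d2} in $\Omega$ and is computed step by step by the displayed recursions; and since, with its rates frozen at this point, the process $\{X(t),I(t)\}$ is a positive recurrent Markov chain (by the stability lemma, as $\lambda<\mu$), its stationary distribution exists, is unique, and coincides with it.

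The crux, and the main obstacle, is this last step: the well-posedness and monotonicity of the head recursion and of the $2\times 2$ consistency map. Each recursion step produces a scalar equation for the next unknown (for instance for $\pi_{k}^{V}$ through the $s_{k}$-dependence of $W_{k}$, with $s_{k}=\pi_{k}^{W}+\pi_{k}^{V}$), which must be shown to have a unique solution inside the ordering constraints defining $\Omega$; I would obtain this from strict monotonicity of the appropriate single-variable map, of the type $z\mapsto zW_{k}$ with the other level-$k$ coordinates held fixed. One must then establish that the map carrying the two parameters to the two residual consistency quantities is strictly monotone, so that self-consistency cannot hold at two distinct parameter pairs; concretely this amounts to tracking, with the correct signs, how a perturbation of $(u_{1},v_{1})$ propagates through the coupled $(\pi_{k}^{W},\pi_{k}^{V})$ recursion. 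A softer route to uniqueness would be to argue that the natural iteration on the compact set $\Omega$ underlying the iterative algorithm is a monotone operator squeezed between its orbits started from the all-zero and the all-one configurations, which share a common limit. Once uniqueness is secured, the telescoping in the tail, the endpoint sign check, and the passage from the balance solution to the stationary distribution are routine.
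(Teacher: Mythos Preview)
Your plan follows essentially the same route as the paper: collapse each $W_{k}$ via the binomial identity, telescope, set up a forward recursion that at each level solves a scalar nonlinear equation, and argue uniqueness of each step by monotonicity and an endpoint sign check. Two points are worth noting.

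\textbf{A missed simplification.} The paper starts by summing \emph{all} of \eqref{1}--\eqref{5}, which (in your notation) is the same as taking $K=1$ in your own telescoped identity $\mu\pi_{K}^{W}=\lambda s_{K-1}^{d}$ and gives immediately $\pi_{1}^{W}=\lambda/\mu$, hence $\pi_{0}^{V}=1-\lambda/\mu$. With this in hand the paper carries only a \emph{single} undetermined scalar, $\delta_{2}=\pi_{2}^{W}$, through the head recursion (their equations $G_{k}(x)=0$ play exactly the role of your implicit step for $\pi_{k}^{V}$), and closes with the single equation \eqref{1}. Your two-parameter $(u_{1},v_{1})$ scheme is therefore one dimension larger than necessary; your own telescoping already forces $u_{1}=\lambda/\mu$, and once you use that, your ``$2\times 2$ consistency map'' collapses to a scalar one.

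\textbf{A bookkeeping slip.} You list \eqref{5} at $j=1$ as a \emph{remaining} consistency relation, but you have already consumed it: your identity $a=\lambda(1-s_{1}^{d})$ comes from adding \eqref{1} and \eqref{5} at $j=1$, while $\pi_{2}^{W}=\pi_{1}^{W}-a/\mu$ comes from \eqref{1} alone. So \eqref{5} at $j=1$ is used, not left over. After extracting $\pi_{1}^{W}=\lambda/\mu$ what actually remains is a single closure condition (the paper uses \eqref{1} to pin down $\delta_{2}$), matching the count above.

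On the acknowledged gap: your honest statement that the monotonicity of the closure map is the crux is fair. The paper proves carefully that each step equation $G_{k}(x)=0$ has a unique root in $(0,\xi_{k-1})$ (via $G_{k}'(x)<0$, $G_{k}(\xi_{k-1})<0$, $G_{k}(0)\ge 0$), which is exactly the monotonicity you appeal to; its treatment of the final scalar closure for $\delta_{2}$ is brief. Reducing to one parameter as above makes that final step a single-variable sign/monotonicity check rather than the two-dimensional argument you propose, which is the practical payoff of the paper's initial simplification.
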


\begin{proof}
Simplifying $W_k$ $(k=1,2,3,\ldots)$, we have
\[W_k=\left\{\begin{array}{ll}
               \displaystyle \frac{(u_1+v_0)^d-(u_1+v_1)^d}{v_0-v_1}, & k=1, \\[4mm]
               \displaystyle \frac{(u_{k-1}+v_{k-1})^d-(u_k+v_k)^d}{u_{k-1}+v_{k-1}-u_k-v_k}, & k=2,3,\ldots,M-1, \\[4mm]
               \displaystyle \frac{(u_{M-1}+v_{M-1})^d-u_M^d}{u_{M-1}+v_{M-1}-u_M},& k=M, \\[4mm]
               \displaystyle \frac{u_{k-1}^d-u_k^d}{u_{k-1}-u_k}, & k=M+1,M+2,\ldots.
             \end{array}
\right.
\]

Summing all the equations \eqref{1}-\eqref{5} with all $k$ gives
       \[\lambda(\pi_{0}^{V}+\pi_{1}^{W})^d-\mu \pi_{1}^{W}=0.\]
Using the normalization condition $\pi_{0}^{V}+\pi_{1}^{W}=1$, we have
\[\pi_{1}^{W} =\frac{\lambda}{\mu},~~~~\pi_{0}^{V} = 1- \frac{\lambda}{\mu}.\]

To see $\pi$ can be recursively computed, we first assume that $\xi_0=1- \frac{\lambda}{\mu}$ and $\delta_{1}=\frac{\lambda}{\mu}$. Then, we take that $\pi_{0}^{V}=\xi_0$, $\pi_{1}^{W}=\delta_{1}$ and $\pi_{2}^{W}=\delta_{2}$ (to be determined later). By $\xi_{1}$, we denote the solution in $(0,\xi_0)$ to the nonlinear equation
\begin{equation*}
  G_{1}(x)=\lambda(\xi_{0}-x)\overline{V}_{1}(\xi_{0},x;\delta_1)+\mu(\delta_2-\delta_1)=0,
\end{equation*}
and $\xi_{2}$ the solution in $(0,\xi_{1})$ to
\begin{equation*}
  G_{2}(x)=\lambda(\xi_{1}-x)\overline{V}_{2}(\xi_{1},x;\delta_1,\delta_2)+\mu(\delta_2-\delta_1)=0.
\end{equation*}
We then have
\begin{equation*}
  \delta_{3}=\frac{\lambda}{\mu}(\delta_2-\delta_1)\overline{W}_{2}(\xi_1,\xi_2;\delta_1,\delta_2)+(2\delta_2-\delta_1).
\end{equation*}
Following this iterative algorithm, we can have $(\xi_0,\xi_1,\cdots,\xi_{M-1})$ and $(\delta_{1},\delta_{2},\cdots,\delta_{M})$, which contains only one unknown parameter $\delta_2$ that can be determined by applying equation \eqref{1}.
Moreover,
\begin{equation*}
  \delta_{k+1}=\frac{\lambda}{\mu}(\delta_{k}-\delta_{k-1})W_{k}(\delta_{k-1},\delta_{k}),~~~k=M,M+1,\ldots.
\end{equation*}

Now, set
\begin{eqnarray*}
\pi_{k}^{V} &=& \xi_{k},~~~k=0,1,\ldots,M-1, \\
\pi_{j}^{W} &=& \delta_{j},~~k=1,2,\ldots,
\end{eqnarray*}
which is a globally asymptotically stable fixed point for the limiting process.

We finally prove the uniqueness of $\pi$.
Substituting $x=0$ in $G_1(x)$ and equation \eqref{1}, we have
\begin{eqnarray*}
  G_{1}(0)&=&\lambda\left[((\xi_0+\delta_1)^d-(\delta_1)^d)-\frac{\xi_{M-1}}{\delta_{M-1}-\delta_{M}+\xi_{M-1}}((\xi_{M-1}+\delta_{M-1})^d-(\delta_M)^d)\right]\\
         &=&\lambda[(\xi_0\sum\limits_{k=0}\limits^{d-1}(\xi_0+\delta_1)^k(\delta_1)^{d-1-k})-(\xi_{M-1}\sum\limits_{k=0}\limits^{d-1}(\xi_{M-1}+\delta_{M-1})^k(\delta_M)^{d-1-k}))]\\
         &\geq&\lambda\xi_0\sum\limits_{k=0}\limits^{d-1}[(\xi_0+\delta_1)^k(\delta_1)^{d-1-k}-(\xi_{M-1}+\delta_{M-1})^k(\delta_M)^{d-1-k}].
\end{eqnarray*}
Since $\xi_{0}\geq\xi{M-1}$ and $\delta_1\geq\delta_{M-1}\delta_{M}$, $G_1(0)\geq 0$.
Also,
\begin{equation*}
  G_1(\xi_{0})=\mu(\delta_{2}-\delta_{1})<0.
\end{equation*}
Differentiating $G_1(x)$, we can derive
\begin{equation*}
  G'_1(x)=-\lambda(x+\delta_1)^{d-1}<0,
\end{equation*}
which means that $G_1(x)$ is monotonically decreasing.
Since $G_{1}(\xi_1)=0$, $\xi_{1}$ is the unique solution to $G_1(x)$ in the interval $(0, \xi_0)$.
Similarly, we can prove that $\xi_{k}$ is the unique solution to $G_k(x)$ in interval $(0,\xi_{k-1})$.
\end{proof}

\begin{remark}
We obtained the (unique) stationary distribution $(\mathbf{\pi}^{W},\mathbf\pi^{V})$ for the process $\{X(t),I(t)\}$ and emphasized that under the ergodic condition, the limiting distribution of $\{X(t),I(t)\}$ is $\lim\limits_{t\to \infty}(\mathbf{u}(t,\mathbf{c}_u),\mathbf{v}(t,\mathbf{c}_v))=(\mathbf{\pi}^{W},\mathbf\pi^{V})$, based on which it is clear that $(\mathbf{\pi}^{W},\mathbf\pi^{V})$ is also the globally asymptotically stable fixed point for the mean field equations.
\end{remark}


The following theorem confirms that the stationary distribution for the original system can be approximated by the stationary distribution of the limiting process.
\begin{theorem}
\begin{equation*}
 \lim\limits_{N\longrightarrow\infty}\widetilde{\pi}_{N}=\delta_{\pi},
\end{equation*}
or equivalently $(\mathbf{u}^N(\infty),\mathbf{v}^N(\infty))\to (\mathbf{\pi}^{W},\mathbf\pi^{V})$, where $\widetilde{\pi}_{N}$ is the stationary distribution of $\displaystyle \{(\textbf{U}^{(N)}(t), \textbf{V}^{(N)}(t)),t\geq 0\}$.
\end{theorem}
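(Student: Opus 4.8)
The plan is to prove the statement by a standard ``interchange of limits'' argument: compactness produces subsequential limits of the stationary measures $\widetilde{\pi}_N$, invariance identifies any such limit as an invariant measure of the deterministic mean field flow, and global asymptotic stability of the fixed point forces that invariant measure to be $\delta_\pi$. First, since $\lambda<\mu$ (the stability lemma), for each finite $N$ the process $\{(\textbf{U}^{(N)}(t),\textbf{V}^{(N)}(t))\}$ is an irreducible, positive recurrent Markov chain on the countable state space $\Omega_N$, so the stationary distribution $\widetilde{\pi}_N$ exists and is unique; we regard it as a Borel probability measure on the compact metric space $(\Omega,\rho)$ of Theorem~\ref{space}. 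Because $\Omega$ is compact, $\mathcal{P}(\Omega)$ is compact in the topology of weak convergence, so the sequence $\{\widetilde{\pi}_N\}_{N\ge 1}$ has a weakly convergent subsequence; let $\widetilde{\pi}_{N_k}\Rightarrow\nu$ for some $\nu\in\mathcal{P}(\Omega)$.

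Second, I would show $\nu$ is invariant for the limiting flow. Fix $t\ge 0$ and $f$ in the core $D$ used in the proof of Theorem~\ref{limiting process}. Stationarity of $\widetilde{\pi}_{N_k}$ gives $\int_\Omega T_{N_k}(t)f\,d\widetilde{\pi}_{N_k}=\int_\Omega f\,d\widetilde{\pi}_{N_k}$. From $\mathbf{A}_N f\to \mathbf{A}f$ on $D$ and the Trotter--Kurtz approximation already invoked in the proof of Theorem~\ref{limiting process}, we have $T_N(t)f\to T(t)f$ uniformly on $\Omega$, and $T(t)f(\textbf{c})=f(\textbf{u}(t,\textbf{c}_u),\textbf{v}(t,\textbf{c}_v))$ is continuous on $\Omega$ by continuous dependence of the mean field solution on its initial data (Propositions~\ref{p2.1}--\ref{pd}). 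Writing $\int T_{N_k}(t)f\,d\widetilde{\pi}_{N_k}-\int T(t)f\,d\nu$ as $\int(T_{N_k}(t)f-T(t)f)\,d\widetilde{\pi}_{N_k}+\big(\int T(t)f\,d\widetilde{\pi}_{N_k}-\int T(t)f\,d\nu\big)$, the first term vanishes by uniform convergence and the second by weak convergence applied to the continuous bounded function $T(t)f$; together with the analogous passage on the right-hand side this yields $\int_\Omega T(t)f\,d\nu=\int_\Omega f\,d\nu$. Since $D$ is a core, this extends to all bounded continuous $f$, so $\int_\Omega f(\textbf{u}(t,\textbf{c}_u),\textbf{v}(t,\textbf{c}_v))\,\nu(d\textbf{c})=\int_\Omega f\,d\nu$ for every $t\ge 0$; i.e., $\nu$ is invariant for the deterministic flow.

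Third, I would identify $\nu=\delta_\pi$. By Theorem~\ref{fixed point} the limiting flow has the unique fixed point $\pi=(\mathbf{\pi}^{W},\mathbf{\pi}^{V})$, which is globally asymptotically stable, so $(\textbf{u}(t,\textbf{c}_u),\textbf{v}(t,\textbf{c}_v))\to\pi$ as $t\to\infty$ for every $(\textbf{c}_u,\textbf{c}_v)\in\Omega$. For bounded continuous $f$, invariance gives $\int_\Omega f\,d\nu=\int_\Omega f(\textbf{u}(t,\textbf{c}_u),\textbf{v}(t,\textbf{c}_v))\,\nu(d\textbf{c})$ for all $t$, and letting $t\to\infty$, dominated convergence (using $\|f\|_\infty<\infty$ and pointwise convergence of trajectories) yields $\int_\Omega f\,d\nu=f(\pi)$. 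Hence $\nu=\delta_\pi$. Since every weakly convergent subsequence of $\{\widetilde{\pi}_N\}$ has the same limit $\delta_\pi$ and the sequence lies in the compact set $\mathcal{P}(\Omega)$, the whole sequence converges: $\widetilde{\pi}_N\Rightarrow\delta_\pi$, equivalently $(\textbf{u}^N(\infty),\textbf{v}^N(\infty))\to(\mathbf{\pi}^{W},\mathbf{\pi}^{V})$.

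The main obstacle is Step~2, namely pushing the stationarity identity cleanly through the double limit on the infinite-dimensional, non-locally-compact state space $\Omega$: one must secure a convergence $T_N(t)f\to T(t)f$ strong enough to interact with weak convergence of measures (uniform convergence suffices, and it is what the Trotter--Kurtz argument behind Theorem~\ref{limiting process} delivers), and one must know that $T(t)f$ is genuinely continuous on $\Omega$, which is exactly where the derivative bounds of Proposition~\ref{pd} are used. A secondary point to make explicit is that the global asymptotic stability from Theorem~\ref{fixed point} is used only pointwise in the initial condition, so that bounded convergence in Step~3 is legitimate without any uniformity in $\textbf{c}$.
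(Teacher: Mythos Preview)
Your proof is correct and follows the same tightness-plus-identification strategy as the paper: compactness of $\Omega$ (Theorem~\ref{space}) yields relative compactness of $\{\widetilde{\pi}_N\}$, and every subsequential limit is shown to equal $\delta_\pi$ by combining the process/semigroup convergence of Theorem~\ref{limiting process} with the global asymptotic stability of $\pi$ from Theorem~\ref{fixed point}. The paper's own argument is terser and leaves the invariance step you spell out (your Step~2) implicit, but the underlying logic is identical; one small slip in your write-up is calling $\Omega$ ``non-locally-compact''---it is compact by Theorem~\ref{space}, which is precisely what makes Step~1 automatic.
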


\begin{proof}
We have proved that the space $\Omega$ is compact under the metric $\rho$. Hence, the sequence of probability measures $(\widetilde{\pi}_N)_N$ is tight. Thus, $(\widetilde{\pi}_N)_N$ is relatively compact by Theorem 2.2 (p.104) of \cite {EK}, and then has limit points. In order to prove this theorem, we now only need to show that all convergent subsequences share the same limiting points.

Let $(\widetilde{\pi}_{N_k})_k$ denote an arbitrary subsequence of $(\widetilde{\pi}_N)_N$, which converges to the distribution $\widetilde{\pi}$. Then, let the process $(\textbf{U}^{N_k}(t),\textbf{V}^{N_k}(t))$, $(\textbf{u}(t),\textbf{v}(t))$ start from stationary state $\widetilde{\pi}_{N_k}$ and $\widetilde{\pi}$, respectively. According to Theorem \ref{limiting process}, it follows that $(\textbf{U}^{N_k}(t),\textbf{V}^{N_k}(t))\longrightarrow(\textbf{u}(t),\textbf{v}(t)),$ so $\widetilde{\pi}_{N_k}\longrightarrow\delta_{\pi}$. Together with $\widetilde{\pi}_{N_k}\longrightarrow\widetilde{\pi}$, we have $\widetilde{\pi}_{N_k}=\delta_{\pi}$. Thus, the theorem is proved.
\end{proof}

Before ending this section, we provide useful expressions, based on the stationary distribution for the limiting process, which are approximations to the respective stationary measures for the original queueing system under the stability condition $\lambda<\mu$.

The mean queue length when a machine is working:
\begin{equation}
  E(Q_W)=\sum\limits_{k=1}\limits^{\infty}\pi_{k}^{W}=\sum\limits_{k=1}\limits^{\infty}\delta_k.
\end{equation}

The mean queue length when a machine is dormant:
\begin{equation}
  E(Q_D)=\sum\limits_{k=1}\limits^{\infty}\pi_{k}^{V}=\sum\limits_{k=1}\limits^{M-1}\xi_k.
\end{equation}

The mean length:
\begin{eqnarray}
  \nonumber E(Q)&=&\sum\limits_{k=1}\limits^{\infty}P(Q\geq k)\\
  &=&\sum\limits_{k=1}\limits^{M-1}(\pi_k^W+\pi_k^V)+\sum\limits_{k=M}^{\infty}\pi_k^W=\sum\limits_{k=1}\limits^{\infty}\delta_k+
  \sum\limits_{k=1}\limits^{M-1}\xi_k.
\end{eqnarray}

The mean sojourn time of a task at a working virtual machine:
\begin{equation}
E(S_W)=\sum\limits_{k=2}\limits^{\infty}k\mu\frac{(\delta_{k-1}-\delta_{k})W_{k}\mu}{\lambda}.
\end{equation}

The mean sojourn time of a task at a dormant virtual machine:
\begin{equation}
  E(S_V)=\sum\limits_{k=1}\limits^{M-1}(\frac{M-k}{\lambda}+k\mu)\frac{(\xi_{k-1}-\xi_{k})W_{k}\mu}{\mu-\lambda}+\frac{M\mu^2\xi_{M-1}W_{M}}{\mu-\lambda}.
\end{equation}

The mean sojourn time of an arbitrary task in this system:
\begin{equation}
 E(S)=\sum\limits_{k=2}\limits^{\infty}k\mu(\delta_{k-1}-\delta_{k})W_{k}+\sum\limits_{k=1}\limits^{M-1}(\frac{M-k}{\lambda}+k\mu)(\xi_{k-1}-\xi_{k})W_{k}+M\mu\xi_{M-1}W_{M}.
\end{equation}

\section{Numerical analysis}

In this section, we provide some numerical examples to show differences of mean queue length between simulation results (true values) and mean field limit value (approximations), as well as the impacts of system parameters on the system performance. Unless otherwise specified, parameters are set by the following default: $\mu=1$ (for convenience), $M=2$ for easy calculations, $\lambda=0.39$ (a moderate traffic case), and $d=2$ (which is not crucial and a similar pattern can be observed for larger $d$).

\begin{figure}
  \centering
  \includegraphics[width=13cm,height=8cm]{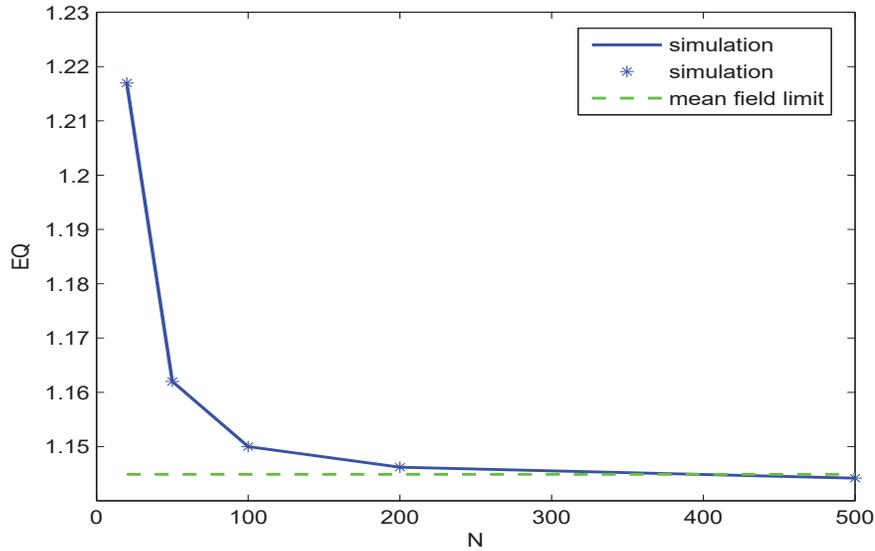}
  \caption{The difference for the average queue length between simulation results, for various $N$, and the mean field limit}\label{simulation}
\end{figure}
We simulate the model with $N=20,50,100,200,500$, respectively, and the simulation results are shown in Fig.~\ref{simulation}, together with the approximations based on the mean field limit. As expected, the difference between them becomes smaller as $N$ becomes larger. In fact, we can see that when $N=200$ the approximation is very close to the real value ( relative error $< 10\%$, when $N=20$; relative error $< 1\%$, when $N=100$).

\begin{figure}
\centering
\includegraphics[width=13cm,height=8cm]{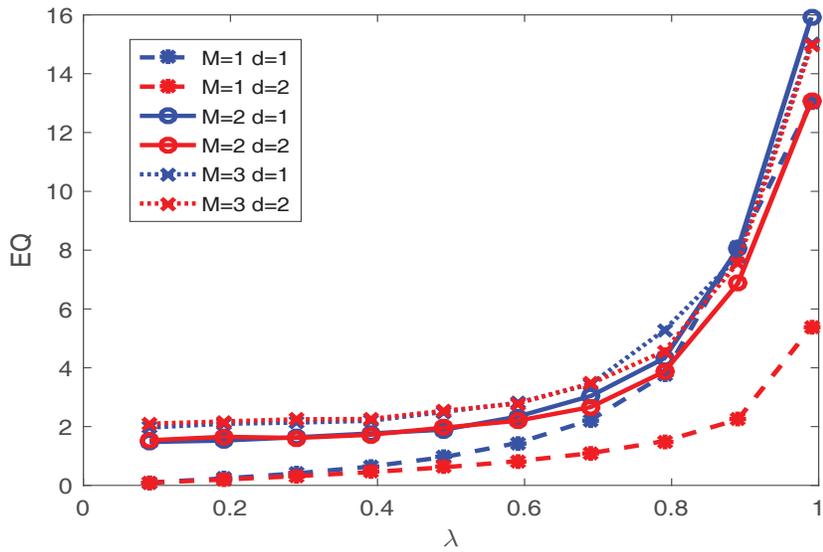}
\caption{The mean queue length versus $\lambda$}\label{11}
\end{figure}
\begin{figure}
\centering
\includegraphics[width=13cm,height=8cm]{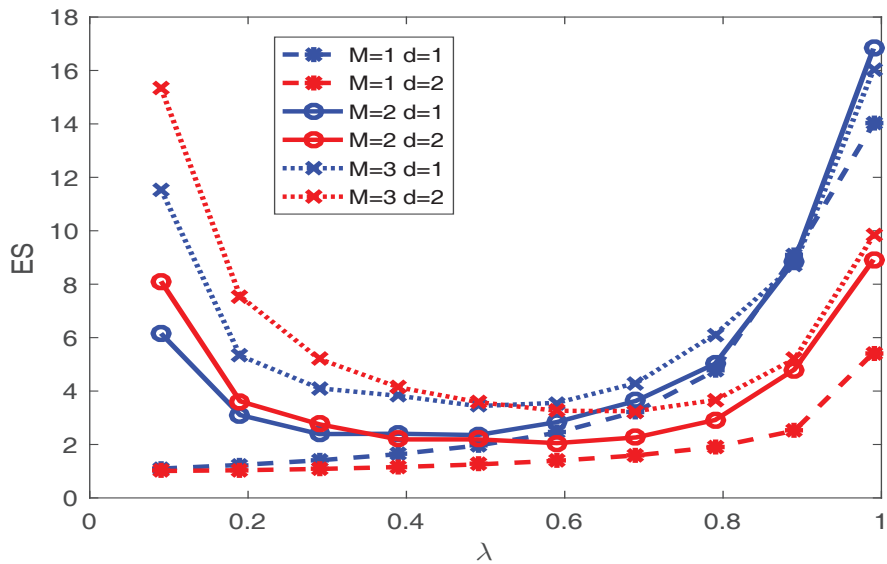}
\caption{The mean sojourn time versus $\lambda$}\label{22}
\end{figure}

Fig.~\ref{11} and Fig.~\ref{22} depict how the mean queue length and mean sojourn time depend on the the threshold value $M$ and sample size $d$, for various values of $\lambda\in[0.09,0.99]$. Fig.~\ref{11} shows that $EQ$ increases with $\lambda$ for any fixed $d$. Moreover, $EQ$ declines when $d$ becomes larger for any fixed $M$. Fig.~\ref{22} illustrates that $ES$ becomes larger with larger $M$ for a fixed value of $d$, as expected. For $M\geq2$, it is interesting enough to note that the mean sojourn time decreases to the minimum as $\lambda$ increases to a certain point, after that the sojourn time increases as $\lambda$, due to the fact that more and more virtue machines become dormant as $\lambda$ becomes smaller and smaller, and a longer time is needed to reach the threshold value, a phenomenon we do not have for the system with $M=1$. However, as $\lambda$ become large enough, the time needed to reach the threshold decreases to zero, and then the mean sojourn time has a positive correlation with $\lambda$, a phenomenon consistent with that for the system with $M=1$. Moreover, when $\lambda$ is small, the sojourn time can be longer for bigger $d$, since when the system is almost empty, the bigger $d$ means an arrival task joins a dormant virtual machine with a greater possibility, then dormant machines need more time to be activated. This  phenomenon will disappear as $\lambda$ increase to a certain point, after which the sojourn time is always smaller when $d$ is bigger. So, curves for $d=1$ and $M=2$, and for $d=2$ and $M=2$ cross each other at a certain $\lambda$ value.

\begin{figure}
\centering
\includegraphics[width=13cm,height=8cm]{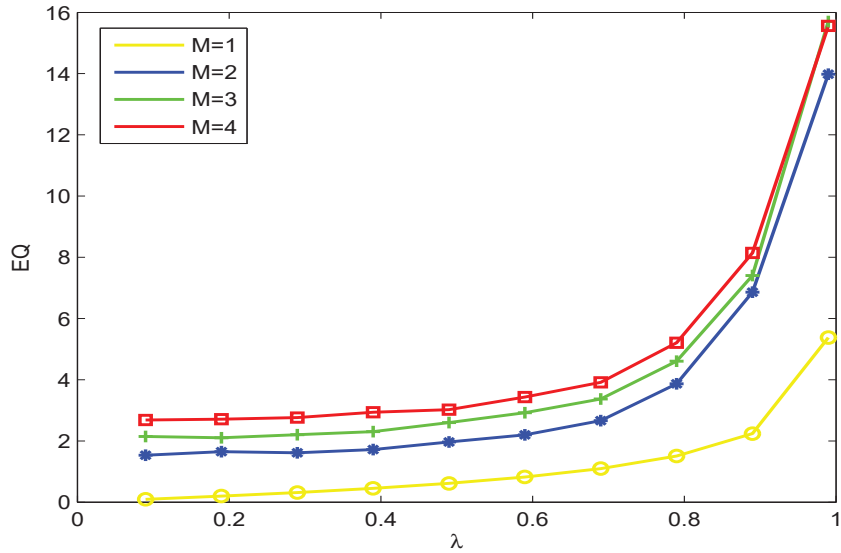}
\caption{The mean queue length versus $\lambda$}\label{33}
\end{figure}
\begin{figure}
\centering
\includegraphics[width=13cm,height=8cm]{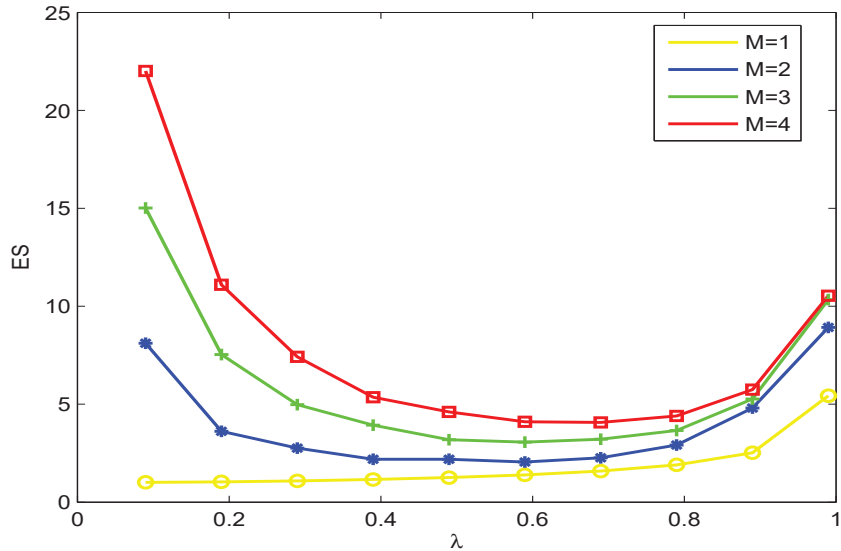}
\caption{The mean sojourn time versus $\lambda$}\label{44}
\end{figure}

Fig.~\ref{33} and Fig.~\ref{44} demonstrate how $\lambda$ impacts on the two performance measures, $EQ$ and $ES$, respectively, for different threshold values.
Fig.~\ref{33} shows that $EQ$ increases with $\lambda$ and $M$, respectively. Fig.~\ref{44} demonstrates that $ES$ becomes longer as $M$ becomes larger. Besides, when $M$ becomes larger, the $\lambda$ corresponding to the shortest sojourn time increases.

In Table.~1 and Table.~2, we provide the optimal $M$, which is chosen according to the system performance measures: the mean queue length and the mean sojourn time, respectively. For example, the second column in Table.~1 shows that if the mean queue length is required to be at most $4$, the corresponding optimal threshold value is $5$ (or the maximal value of $M$). Similarly, the figures given in Table.~2 are the optimal values for $M$ within the maximal acceptable mean sojourn time.
\[\begin{array}{cc}
\text{\small Table 1. the optimal M with maximum EQ}~~~&\text{\small Table 2. the optimal M with maximum ES}\\
\begin{tabular}{cccccc}
\hline\hline
$EQ$ & 4 & 6 & 8 &10 & 12 \\ \hline
$M$ & 5 & 8& 12 & 14 & 18 \\
\hline\hline
\end{tabular}
 ~~~ &
\begin{tabular}{cccccc}
\hline\hline
$ES$ & 6 & 9& 12 &15 & 18 \\ \hline
$M$ & 5 & 9 & 11 & 15 & 20 \\
\hline\hline
\end{tabular}
\end{array}\]

\section{Conclusion}

This paper studies a homogeneous queueing system with threshold-based workload control scheme. We first set up a Markov process for this system and construct an infinite-dimensional system of mean field limit equations. we then apply convergence theories to prove that the above Markov process converges to the solution to the mean field limit equations. In addition, we provide an iterative algorithm for computing the fixed point of the deterministic system which is indeed the stationary behaviour of the limiting process. Finally, numerical examples and simulation results for performance measures are also presented, which can be used for the purpose of system design and optimization. This queueing system saves $(1-\frac{\lambda}{\mu})$\% of the energy consumption compared to the ordinary supermarket models. On the other hand, during the idle time if virtue machines can be  used for other tasks, the machine utilization will be improved significantly.

There are several avenues for further research. We have proved that the Markov process for this homogeneous system converges to a limiting process, and in the future study, one could investigate the convergence rate of this Markov process. The algorithm for computing the fixed point could be further improved to be more effective. There are also many model extensions worthy a consideration. For example, one could consider a general heterogeneous queueing system with a non-exponential service times and a non-Poisson arrival process.

\section*{Acknowledgement}
This work was supported in part by the National Natural Science Foundation of China (No.61773014) and the Natural Sciences and Engineering Research Council of Canada.

\appendix

\section{Proof to Theorem~\ref{space}}
\label{appendix A}


\begin{proof}
 Proving the compactness of space $\Omega$ under metric $\rho$ is equivalent to prove that any sequence $\left\{(\textbf{u},\textbf{v})_{n}, n\in Z_+\right\}\subset \Omega$ has a convergent subsequence $\left\{(\textbf{u},\textbf{v})_{n_m},n_m\in Z_+\right\}$ and the convergence point belongs to $\Omega$ under the metric $\rho$. Observing that all the elements of vector $(\textbf{u},\textbf{v})$ are bounded and less than one,  \red{so,} $(\textbf{u},\textbf{v})_{n}$ must have a convergent subsequence $(\textbf{u},\textbf{v})_{n_m}$, $(\textbf{u},\textbf{v})_{n_m}\rightarrow(\textbf{u},\textbf{v})$ and $(\textbf{u},\textbf{v})\in \Omega$ in the sense of absolute value.

Then, we need to show that $\displaystyle \rho\left\{(\textbf{u},\textbf{v})_{n_m}, (\textbf{u},\textbf{v})\right\}\longrightarrow0$ as $m\longrightarrow\infty$.
We choose am $m$ large enough such that
\[\displaystyle\mid\frac{u_{n_m}(k)-u(k)}{k+1}\mid\leq\frac{1}{l+1},~~~~~ \displaystyle\mid\frac{v_{n_m}(j)-u(j)}{j+1}\mid\leq\frac{1}{l+1},\]
 for $0\leq k,j\leq l$. When $k,j>l$,
 \[\displaystyle\mid\frac{u_{n_m}(k)-u(k)}{k+1}\mid\leq\frac{1}{l+1},~~~~~ \displaystyle\mid\frac{v_{n_m}(j)-u(j)}{j+1}\mid\leq\frac{1}{l+1}\]
  also holds, since $\mid u_{n_m}(k)-u(k)\mid\leq 1$, $\mid v_{n_m}(j)-v(j)\mid\leq 1$ and $k,j>l$.

 Therefore,
\[\displaystyle \rho\left\{(\textbf{u},\textbf{v})_{n_m}, (\textbf{u},\textbf{v})\right\}\leq\frac{1}{l+1},\]
implying that $\displaystyle \rho\left\{(\textbf{u},\textbf{v})_{n_m}, (\textbf{u},\textbf{v})\right\} \longrightarrow 0$ as $l\longrightarrow\infty$. So, the space $\Omega$ is compact under the metric $\rho$.
\end{proof}

\section{Proof to Theorem~\ref{theorem 2.3}}
\label{appendix B}

\begin{proof}
 There are two processes in this system: the arrival process and the service process. We proceed with our proof accordingly. In the proof, we use the abbreviation VM for virtual machine.

\begin{description}
  \item[Arrival process: joining a working VM]
  In this process, there are three different parts divided according to the number of tasks in a virtual machine and its buffer. 
        \begin{description}
            \item [$\mathbf{k=2,3,\ldots,M-1}$]
                This part contains three possibilities.
                \begin{description}

                    \item[Possibility \uppercase\expandafter{\romannumeral 1}] All sampled VMs are in working state, and there is at least one VM with $k-1$ tasks, while the others are with at least $k$ tasks. An arriving task is assigned to any one of the working VM with $k-1$ tasks. The probability of this situation is given by
                        \begin{equation}
                       \hspace*{-1.8cm} \sum\limits_{n=1}\limits^{d}C^{n}_{d}(u_{k-1}^{N}(t)-u_{k}^{N}(t))^{n}(u_{k}^{N})^{d-n}
                                 = (u_{k-1}^{N}(t)-u_{k}^{N}(t))\times  \displaystyle\sum\limits_{n=1}\limits^{d}C^{n}_{d}(u_{k-1}^{N}(t)-u_{k}^{N}(t))^{n-1}(u_{k}^{N})^{d-n}.
                        \end{equation}

                    \item[Possibility \uppercase\expandafter{\romannumeral 2}] There are both working and dormant VMs in the sample. There are at least one working VM with $k-1$ tasks, while other working and dormant VMs are all with at least $k$ tasks. An arriving task is assigned to any one of the working VM with $k-1$ tasks. The probability of this situation is given by,
                            \begin{eqnarray}
                                \nonumber&&\displaystyle\sum\limits_{n=1}\limits^{d-1}C^{n}_{d}(u_{k-1}^{N}(t)-u_{k}^{N}(t))^{n}
                                \sum\limits_{m=1}\limits^{d-n}(v_k^N(t))^m(u_{k}^{N})^{d-n-m}  = \\
                                 && \hspace*{-2.5cm} (u_{k-1}^{N}(t)-u_{k}^{N}(t)) \displaystyle\sum\limits_{n=1}\limits^{d-1}C^{n}_{d}(u_{k-1}^{N}(t)-u_{k}^{N}(t))^{n-1}
                                        \left[(v_k^N(t)+u_k^N(t))^{(d-n)}-(u_{k}^{N})^{d-n}\right]
                            \end{eqnarray}

                    \item[Possibility \uppercase\expandafter{\romannumeral 3}] This situation also contains both working and dormant VMs. There is at least one working VM with $k-1$ tasks and at least one dormant VM with $k-1$ tasks, while other VMs (either working or dormant) with at least $k$ tasks. An arriving task is assigned to any one of the working VM with $k-1$ tasks randomly. The probability of this situation is given by
                            \begin{eqnarray}
                                \nonumber&&\displaystyle\sum\limits_{n=2}\limits^{d}C^{n}_{d}\sum\limits^{n-1}\limits_{m_1=1}
                                            C_{n}^{m_1}\frac{m_1}{n}(u_{k-1}^{N}(t)-u_{k}^{N}(t))^{m_1}(v_{k-1}^{N}(t)-v_{k}^{N}(t))^{n-m_1}\\
                                 \nonumber&&\times\displaystyle\sum\limits^{d-n}\limits_{m=0}C^{m}_{d-n}(u_{k}^{N})^m(v_{k}^{N})^{d-m-n}\\
                                 \nonumber&=&\displaystyle(u_{k-1}^{N}(t)-u_{k}^{N}(t))\displaystyle\sum\limits_{n=2}\limits^{d}C^{n}_{d}
                                            \sum\limits^{n-1}\limits_{m_1=1}C_{n}^{m_1}\frac{m_1}{n}(u_{k-1}^{N}(t)-u_{k}^{N}(t))^{m_1-1}\\
                                          &\times&(v_{k-1}^{N}(t)-v_{k}^{N}(t))^{n-m_1}(u_{k}^{N}(t)+v_{k}^{N}(t))^{d-n}
                            \end{eqnarray}
                 \end{description}
              Thus, the probability that a new arriving task joins a working VM with exact $k-1 (k=2,3,\ldots,M-1)$ tasks, is given by
              \begin{equation*}
                (u_{k-1}^{N}(t)-u_{k}^{N}(t))W_{k},
              \end{equation*}
              where
              \begin{equation}\label{WkleqM-1}
                W_{k}=\sum\limits_{n=1}\limits^{d}C^{n}_{d}(u_{k}^{N}(t)+v_{k}^{N}(t))^{d-n}
                (u_{k-1}^{N}(t)-u_{k}^{N}(t)+v_{k-1}^{N}(t)-v_{k}^{N}(t))^{n-1}.
              \end{equation}
    \end{description}
    The analysis of the other two parts, $k=M$ and $k\geq M$, can be carried out similarly. However, it is worth noting that there are no dormant VMs with at least $M$ tasks, that is $v_{k}^{N}=0, k\geq M$. The probability that a new arriving task joins a working VM with exact $k-1,(k=M,M+1,\ldots)$ tasks, is given by
              \begin{equation*}
                (u_{k-1}^{N}(t)-u_{k}^{N}(t))W_{k},
              \end{equation*}
              where
              \begin{equation}\label{wk}
                W_{k}=\left\{\begin{array}{ll}
                               \sum\limits_{n=1}\limits^{d}C^{n}_{d}(u_{M}^{N}(t))^{d-n}
                                (u_{M-1}^{N}(t)-u_{M}^{N}(t)+v_{M-1}^{N}(t))^{n-1}, & k=M, \\
                                \sum\limits_{n=1}\limits^{d}C^{n}_{d}(u_{k-1}^{N}(t)-u_{k}^{N}(t))^{n-1}
                                (u_{k}^{N})^{d-n},& k=M+1,M+2,\cdots.
                             \end{array}
                \right.
              \end{equation}
  \item[Arrival process: joining a dormant VM] In order to avoid repeatability in the analysis, we give the following results directly. The probability that a new arriving task joins a dormant VM with exact $k-1,(k=1,2,3,\ldots,M-1)$ tasks, is given by
      \begin{equation*}
                (v_{k-1}^{N}(t)-v_{k}^{N}(t))V_{k},
              \end{equation*}
              where
       \begin{equation*}\label{vk}
                V_{k}=\left\{\begin{array}{ll}
                               \sum\limits_{n=1}\limits^{d}C_{d}^{n}(v_{0}^{N}(t)-v_{1}^{N}(t))^{n-1}(v_{1}^{N}(t)+
                                            u_{1}^{N}(t))^{d-n}, ~~~~~~~~~~~~~~~~~~k=1, \\
                               \sum\limits_{n=1}\limits^{d}C^{n}_{d}(u_{k}^{N}(t)+v_{k}^{N}(t))^{d-n}
                                (u_{k-1}^{N}(t)-u_{k}^{N}(t)+v_{k-1}^{N}(t)-v_{k}^{N}(t))^{n-1},& \\
                                ~~~~~~~~~~~~~~~~~~~~~~~~~~~~~~~~~~~~~~~~~~~~~~~~~~~~~~~~~~~~k=2,3,\ldots,M-1.
                             \end{array}
                \right.
       \end{equation*}
       The probability that a new arriving task joins a working VM with exact $M-1$ tasks is given by
       \begin{equation*}
         v_{M-1}V_{M}= v_{M-1}\sum\limits_{n=1}\limits^{d}C^{n}_{d}(u_{M}^{N}(t))^{d-n}(u_{M-1}^{N}(t)-u_{M}^{N}(t)+v_{M-1}^{N}(t))^{n-1}.
       \end{equation*}
       Noting that $W_{k}=V_{k}$, $k=2,3,\cdots,M$, in the rest paper,  we denote$W_1=V1$, and  all $V_k$ are expressed by $W_k$.

  \item[Service process] The service rate that a working VM with exact $k$ tasks completes a task is given by
     \begin{equation}
        N\mu(u_{k}^{N}(t)-u_{k+1}^{N}(t)).
     \end{equation}
\end{description}
Substituting the state transition and above probabilities into equation \eqref{Ant}, \eqref{An} can be easily derived.
\end{proof}

\section{Proofs to Propositions~\ref{p2.1} and \ref{pd}}
\label{appendix C}

\begin{proof}[Proof to Proposition~\ref{p2.1}.]
 Let \begin{eqnarray*}
                       x&=& (u_1,v_1;u_2,v_2;u_{3},v_{3};\ldots), \\
                     F(x)&=& (F_1(x),F_2(x),\cdots,F_{M-1}(x),F_M(x),\ldots),
                    \end{eqnarray*}
where,

\begin{equation}
  F_k(x)=\left\{\begin{array}{ll}
                   \left(\mu(u_{2}-u_{1})+\lambda v_{M-1}V_{M},  \lambda (v_{0}-v_{1})V_{k}-\lambda v_{M-1}V_{M}\right),~~~k=1,\\ [4mm]
                  \left( \lambda (u_{k-1}-u_{k})W_{k}+\mu(u_{k+1}-u_{k})+\lambda v_{M-1}V_{M}, \lambda (v_{k-1}-v_{k})V_{k}-\lambda v_{M-1}V_{M}\right),  \\[4mm]
                  ~~~~~~~~~~~~~~~~~~~~~~~~~~~~~~~~~~~~~~~~~~~~~~~~~~~~~~~~~~~~~~~~~~~~~~k=2,3,\ldots,M-1, \\[4mm]
                  (\lambda (u_{M-1}-u_{k})W_{k}+\mu(u_{M+1}-u_{M})+\lambda v_{M-1}V_{M},0)~~~~k=M,\\[4mm]
                  (\lambda (u_{k-1}-u_{k})W_{k}+\mu(u_{k+1}-u_{k}),0)~~~~~~~~~~~~~~~~~~~~~~~ ~k=M+1,M+2,\ldots.
                \end{array}
  \right.
\end{equation}
Note that $v_{k}=0$ for $k=M,M+1,\ldots$.

Then, the matrix of partial derivatives of the function $F(x)$ is given by
\begin{equation*}
  DF(x)=\left(
          \begin{array}{ccccc}
            A_1(x) & B_2(x) &        &      &  \\
            C_1(x) & A_2(x) & B_3(x) &       &   \\
                   & C_2(x)& A_3(x) & B_4(x) &   \\
                   &       &  \ddots&\ddots&\ddots\\
          \end{array}
        \right),
\end{equation*}
where
$A_{k},B_{k}$ and $C_{k}$, $k\geq1$ are $2\times 2$ matrices.
Sice $W_{k}=V_{k}$ $(k=2,3,\cdots,M)$, we have
\begin{equation*}
 A_{1}=\left(
       \begin{array}{cc}
       -\mu & \lambda(v_0-v_1)\frac{\partial W_1}{\partial u_1} \\[4mm]
       0 & -\lambda W_1+\lambda(v_0-v_1)\frac{\partial W_1}{\partial v_1} \\[4mm]
      \end{array}
     \right),
\end{equation*}
for $k=2,3,\ldots,M-2$,
\begin{equation*}
 A_{k}=\left(
       \begin{array}{cc}
       -\lambda W_{k}+\lambda(u_{k-1}-u_{k})\displaystyle\frac{\partial W_k}{\partial u_k}-\mu & \lambda(v_{k-1}-v_{k})\displaystyle\frac{\partial W_k}{\partial u_k} \\[4mm]
       \lambda(u_{k-1}-u_{k})\displaystyle\frac{\partial W_k}{\partial v_k} & -\lambda W_k+\lambda(v_{k-1}-v_{k})\displaystyle\frac{\partial W_k}{\partial v_k} \\[4mm]
      \end{array}
     \right),
\end{equation*}
and
\begin{equation*}
 A_{M-1}=\left(
      \footnotesize\displaystyle \begin{array}{cc}
       -\lambda W_{M-1}+\lambda(u_{M-2}-u_{M-1}+v_{M-1})\displaystyle\frac{\partial W_{M-1}}{\partial u_{M-1}}-\mu & \lambda(v_{M-2}-2v_{M-1})\displaystyle\frac{\partial W_{M-1}}{\partial u_{M-1}} \\[4mm]
       \lambda(u_{M-2}-u_{M-1}+v_{M-1})\displaystyle\frac{\partial W_{M-1}}{\partial v_{M-1}} &-\lambda W_{M-1}+\lambda(v_{M-2}-2v_{M-1})\displaystyle\frac{\partial W_{M-1}}{\partial v_{M-1}} \\[4mm]
      \end{array}
     \right),
\end{equation*}
\begin{equation*}
  A_{k}=\left(
          \begin{array}{cc}
           -\lambda W_{k}+\lambda(u_{k-1}-u_{k})\displaystyle\frac{\partial W_{k}}{\partial u_{k}}-\mu, & 0 \\[4mm]
            0 & 0 \\
          \end{array}
        \right),
   ~~~k=M,M+1,\cdots.
\end{equation*}
For $k=2,3,\ldots,M-1$,
\begin{equation*}
 B_{k}=\left(
       \begin{array}{cc}
       \lambda W_{k}+\lambda(u_{k-1}-u_{k})\displaystyle\frac{\partial W_k}{\partial u_{k-1}} & \lambda(v_{k-1}-v_{k})\displaystyle\frac{\partial W_k}{\partial u_{k-1}} \\[4mm]
       \lambda(u_{k-1}-u_{k})\displaystyle\frac{\partial W_k}{\partial v_{k-1}} & \lambda W_k+\lambda(v_{k-1}-v_{k})\displaystyle\frac{\partial W_k}{\partial v_{k-1}} \\[4mm]
      \end{array}
     \right),
\end{equation*}
and for $k= M,M+1,\ldots$
\begin{equation*}
 B_{M}=\left(
         \begin{array}{cc}
           \lambda W_{M}+\lambda(u_{M-1}-u_{M}+v_{M-1})\displaystyle\frac{\partial W_{M}}{\partial u_{M-1}} & 0 \\[4mm]
           \lambda(u_{M-1}-u_{M}+v_{M-1})\displaystyle\frac{\partial W_{M}}{\partial v_{M-1}}+\lambda W_{M})&0 \\[4mm]
         \end{array}
       \right),
\end{equation*}
\begin{equation*}
  B_{k}=\left(
          \begin{array}{cc}
            \lambda W_{k}+\lambda(u_{k-1}-u_{k})\displaystyle\frac{\partial W_{k}}{\partial u_{k-1}} & 0 \\[4mm]
            0 & 0 \\
          \end{array}
        \right),~~~~~k=M+1,M+2,\cdots.
\end{equation*}
Meanwhile, for $k=1,2,\ldots,M-3$ and $k=M,M+1,\ldots$
\begin{equation*}
  C_{k}=\left(\begin{array}{cc}
          \mu & 0 \\
          0 & 0
        \end{array}\right),
\end{equation*}
and
\begin{equation*}
  C_{M-2}=\left(\begin{array}{cc}
          \mu+\lambda v_{M-1}\displaystyle\frac{\partial W_{M}}{\partial u_{M-1}}& -\lambda v_{M-1}\displaystyle\frac{\partial W_{M}}{\partial u_{M-1}} \\[4mm]
          \lambda W_{M}+\lambda v_{M-1}\displaystyle\frac{\partial W_{M}}{\partial v_{M-1}} & -\left(\lambda W_{M}+\lambda v_{M-1}\displaystyle\frac{\partial W_{M}}{\partial v_{M-1}}\right),\\
        \end{array}\right),
\end{equation*}
\begin{equation*}
  C_{M-1}=\left(
            \begin{array}{cc}
               \mu+\lambda v_{M-1}\displaystyle\frac{\partial W_{M}}{\partial u_{M}} & -\lambda v_{M-1}\displaystyle\frac{\partial W_{M}}{\partial u_{M}} \\[4mm]
              0 & 0 \\
            \end{array}
          \right),
\end{equation*}

Overall, we have
\begin{eqnarray*}
 \| DF(x)\|&=&\max\left\{\| \textbf{e}(A_1(x)+C_1(x))\|,\sup\limits_{k\geq 2}\| \textbf{e}(C_{k}(x)+A_{k}(x)+B_{k}(x))\|\right\}\\
  &\leq& \sum\limits_{n=1}^{d}C_d^n(d-n)2^{d-2},
\end{eqnarray*}
where $\textbf{e}=(1,1)$.
Then, applying Lemma 5 in \cite{lql}, we have
\begin{eqnarray*}
 \|F(\textbf{u},\textbf{v})-F(\textbf{u}',\textbf{v})'\|&\leq&\sup\limits_{0\leq t\leq1}\| DF((\textbf{u},\textbf{v})+t((\textbf{u},\textbf{v})-(\textbf{u}',\textbf{v}')))\|\|(\textbf{u},\textbf{v})-(\textbf{u}',\textbf{v}'))\|\\[4mm]
&\leq& \sum\limits_{n=1}^{d}C_d^n(d-n)2^{d-2}\|(\textbf{u},\textbf{v})-(\textbf{u}',\textbf{v}'))\|.
\end{eqnarray*}
Therefore, the function $F(u,v)$ is Lipschitzian continuous and together with Picard approximation, it is easy to show that the deterministic process \eqref{d1}--\eqref{d2} has a unique solution.
\end{proof}

In Proposition~\ref{pd}, we denote that $A=(2K_1+K_2+K_3+K_4)\lambda+2\mu$, $A'=(2K_1+K_2+K_3+K_4)\lambda$, $B=(2K_2+2K_5+K_4+K_6)\lambda$, and $K_1$ to $K_6$ are given in (\ref{eqn:K1})--(\ref{eqn:K6}), respectively.

\begin{proof}[Proof of Proposition~\ref{pd}.]

 The proof follows the same line of the arguments as the proof to Lemma~3.2 in \cite{JB}.  Since the similarity in derivations, we only compute $\displaystyle \frac{\partial u_{k}}{\partial c_{uk}}$, which is marked as $u'_{k}$ for convenience.
\begin{eqnarray}
 \nonumber \frac{\partial u'_{1}}{\partial t} &=& \mu(u'_{2}(t)-u'_{1}(t))+\lambda v'_{M-1}(t)V_{M}+\lambda v_{M-1}(t)V'_{M}, \\
 \nonumber \frac{\partial u'_{k}}{\partial t} &=& \left\{\begin{array}{l}
                                                  \lambda (u'_{k-1}(t)-u'_{k}(t))W_{k}+\lambda (u_{k-1}(t)-u_{k}(t))W'_{k}+\mu(u'_{k+1}(t)-u'_{k}(t))\\[4mm]
                                                  ~~+\lambda v'_{M-1}(t)V_{M}+\lambda v_{M-1}(t)V'_{M},\\[4mm]
~~~~~~~~~~~~~~~~~~~~~~~~~~~~~~~~~~~~~~~~~~~~~~~~~~~~~~~~~~~~~~ k=2,3,\cdots,M-1,\\[4mm]
                                                  \lambda (u'_{k-1}(t)-u'_{k}(t))W_{k}+\lambda (u_{k-1}(t)-u_{k}(t))W'_{k}+\mu(u'_{k+1}(t)-u'_{k}(t)),\\[4mm]
 ~~~~~~~~~~~~~~~~~~~~~~~~~~~~~~~~~~~~~~~~~~~~~~~~~~~~~~~~~~~~~~  k=M,M+1,\cdots,
                                              \end{array}
                                          \right.
\end{eqnarray}
Applying the Lemma 3.1 in \cite{JB}, we have $A=2(K_1+K_2+K_3+K_4)\lambda+2\mu, b_0=0$ and $c=1$.
Thus,
\begin{equation*}
  \displaystyle\sup\limits_{k,i\geq1}\mid \frac{\partial u_{k}}{\partial c_{ui}}\mid\leq \exp \left\{((2K_1+K_2+K_3+K_4)\lambda +2\mu)t\right\}
\end{equation*}
\end{proof}



\end{document}